\numberwithin{equation}{section}
\newtheorem{theoreme}{Theorem}[subsection]
\newtheorem{proposition}[theoreme]{Proposition}
\newtheorem{prop}[theoreme]{Proposition}
\newtheorem{remarque}[theoreme]{Remark}
\newtheorem{lemme}[theoreme]{Lemma}
\newtheorem{corollaire}[theoreme]{Corollary}
\newtheorem{conjecture}[theoreme]{Conjecture}
\newtheorem{definition}[theoreme]{Definition}
\newtheorem{exemple}[theoreme]{Example}
\newenvironment{proof}[1][Proof]{\noindent \textbf{#1.}~ }
{\hfill\rule{2mm}{2mm} \vspace{\parskip} }
\newcommand{\RR}{\ensuremath{\mathbb R}}
\newcommand{\Z}{\ensuremath{\mathbb Z}}
\newcommand{\R}{\ensuremath{\mathbb R}}
\newcommand{\PP}{\ensuremath{\mathbb P}}
\newcommand{\C}{\ensuremath{\mathbb C}}
\newcommand{\E}{\ensuremath{\mathbb E}}
\newcommand{\II}{\ensuremath{\mathcal I}}
\newcommand{\val}{\mathsf{val}}
\newcommand{\JJ}{\ensuremath{\mathcal J}}
\newcommand{\MM}{\ensuremath{\mathcal M}}
\newcommand{\N}{\ensuremath{\mathbb N}}
\newcommand{\ind}{\ensuremath{\mathds 1}}
\newcommand{\tr}{\ensuremath{\operatorname{tr}}}
\newcommand{\xx}{\ensuremath{\mathbf x}}
\newcommand{\yy}{\ensuremath{\mathbf y}}
\newcommand{\Id}{\ensuremath{\operatorname{Id}}}
\newcommand{\om}{\omega}
\newcommand{\Om}{\Omega}
\newcommand{\De}{\Delta}
\newcommand{\de}{\delta}
\newcommand{\La}{\Lambda}
\newcommand{\Ga}{\Gamma}
\newcommand{\la}{\lambda}
\newcommand{\ep}{\varepsilon}
\newcommand{\limn}{\lim_{n\to\infty}}
\newcommand{\al}{\alpha}
\newcommand{\be}{\beta}
\newcommand{\si}{\sigma}
\newcommand{\ga}{\gamma}
\newcommand*{\transp}[2][-3mu]{\ensuremath{\mskip1mu\prescript{\smash{\mathrm t\mkern#1}}{}{\mathstrut#2}}}%
\definecolor{darkblue}{rgb}{0,0,0.7} 
\newcommand{\darkblue}{\color{darkblue}} 
\newcommand{\defn}[1]{\emph{\darkblue #1}} 
\newcommand{\limla}{\displaystyle \lim_{\lambda \to 0}}
\title{Shapley-Snow kernels, multiparameter eigenvalue problems and stochastic games}
\author{Luc Attia\footnote{\'Ecole Polytechnique, Palaiseau, France.} \ and Miquel Oliu-Barton\footnote{Universit\'e Paris-Dauphine, PSL Research University, CNRS, CEREMADE, Paris, France.}
}
\date{April 14, 2019} 
\begin{document}
 \maketitle

\abstract{We establish, for the first time, a connection between stochastic games and multiparameter eigenvalue problems, using the theory developed by Shapley and Snow (1950). 
This connection provides new results, new proofs, and new tools for studying stochastic games.} 

\setcounter{tocdepth}{2}
\tableofcontents

\section{Introduction}
Stochastic games were introduced in the 1950's by Shapley \cite{shapley53}. They are played by two opponents over a finite set of states: the
 state variable follows a Markov chain controlled by both players, to each state corresponds a matrix game 
that determines a stage payoff, and  
the first player maximises a discounted sum of the stage payoffs 
 while the second player minimises the same amount.
 Stochastic games have a value 
 $v^k_\la\in \R$ for any discount factor $\la\in(0,1]$ and any initial state $k$. 
  Moreover, for each $\la$,
   the vector of values $v_\la=(v^1_\la,\dots,v^n_\la)\in \R^n$ is the unique fixed point of the so-called Shapley operator $\Phi(\la,\,\cdot\,):\R^n\to \R^n$, where $n\in \N^*$ denotes the number of states. 
The convergence of the values as $\la$ vanishes was established by Bewley and Kohlberg \cite{BK76} in the late 70's using Tarski-Seidenberg elimination theorem from mathematical logic and the Puiseux theorem. 
Three alternative proofs have been provided since then, by Szczechla, Connell, Filar and Vrieze \cite{SCFV97}, Oliu-Barton \cite{OB14} and Attia and Oliu-Barton \cite{AOB18a}. Besides the convergence, the latter provided a characterisation of the limit values. \\

But let us go back to matrix games. In the late 1920's, Von Neumann proved the celebrated minmax theorem: ``\emph{Every matrix game $G$ has a value, denoted by $\mathrm{val}(G)$, 
and both players have optimal strategies}''. 
The set of optimal strategies was characterised in the 1950's by Shapley and Snow \cite{SS50} 
as a polytope, and each of its extreme points 
corresponds to 
a square sub-matrix
 $\dot{G}$ of $G$, for which the following formula holds:
\begin{equation}
\label{formula_value}
\mathrm{val}(G)=\frac{\det(\dot{G})}{S(\mathrm{co}(\dot{G}))}
\end{equation}
For any matrix $M$, $\mathrm{co}(M)$ denotes its co-factor matrix and $S(M)$ denotes the sum of its entries. 
The sub-matrices characterising the extreme points of the set of optimal strategies 
are the so-called \emph{Shapley-Snow kernels} of the game. 
Szczechla, Connell, Filar and Vrieze \cite{SCFV97} noted, in the late 1990's, that applying the theory of Shapley and Snow to stochastic games provides, for any fixed discount factor $\la$, a system of $n$ polynomial equalities (in $n$ variables) that is satisfied by the vector of values $v_\la$.
Indeed, for each $1\leq k\leq n$ and $z\in \R^n$, the $k$-th coordinate of Shapley's operator $\Phi^k(\la,z)$ is the value of a matrix game,
denoted by $\mathcal{G}^k(\la,z)$, whose entries depend polynomially in $(\la,z)$.  By considering a Shapley-Snow kernel of each of these games at $z=v_\la$ and by setting: 
\begin{equation}\label{syst_1}
P^k(\la,z):=\det(\dot{\mathcal{G}}^k(\la,z))-z^k {S(\mathrm{co}(\dot{\mathcal{G}}^k(\la,z))}
\end{equation}
one deduces from \eqref{formula_value}  that $v_\la$ satisfies the polynomial equality $P^k(\la,v_\la)=0$. 
Although initially defined for the variables $(\la,z)\in(0,1]\times \R^n$, the polynomial system \begin{equation}\label{aas}P^1(\la,z)=\dots=P^n(\la,z)=0\end{equation}
can also be seen as 
an analytical variety in $\C^{n+1}$. To every choice of a square sub-matrix of $\mathcal{G}^k(\la,z)$ for each $1\leq k\leq n$ thus corresponds an analytical variety, so that their union $\mathcal{C}$ is an analytical variety too.
  Szczechla et al. \cite{SCFV97} prove that the set $\{(\la,v_\la),\, \la\in(0,1]\}$ is a regular $1$-dimensional connected component of $\mathcal{C}$, 
    from which they deduce the convergence of the values $v_\la$ as $\la$ vanishes.\\ 
    
 In the present paper, we propose to apply the theory of Shapley and Snow to stochastic games in a different manner, namely through multiparameter eigenvalue problems (MEP), a terminology introduced by Atkinson in the 1960's. As we will show, our approach considerably simplifies the analysis of stochastic games and provides several new results. 
 %
The connection between MEP, the theory of Shapley and Snow and stochastic games can be described as follows. 
 First of all, represent the stochastic game by an $n\times (n+1)$ array of matrices:
\begin{equation*}\label{aaas}D=\begin{pmatrix}
M_0^1 & M_1^1 &\dots & M^1_n\\
\vdots &  \vdots & \ddots  &  \vdots  \\
M_0^n & M_1^n &\dots &    M^n_n
\end{pmatrix}\end{equation*}
that contains all the relevant data of the stochastic game, namely, the matrices corresponding to each state, the transition probabilities and the discount factor. 
The array representation is reminiscent of MEP, 
except that the matrices in $D$ might be rectangular while MEP are only defined for arrays of square matrices. 
Indeed, for each $1\leq k\leq n$, the matrices 
$M^k_0,\dots,M^k_n$ are square matrices of equal size, then 
$D$ defines a MEP, that is, the problem of finding a vector $z\in \C^n$ which satisfies: 
\begin{equation}\label{syst_2}\det({M}^k_0+ z^1 {M}^k_1+\dots+z^n {M}^k_n)=0,\quad 1\leq k\leq n \end{equation}
MEP can be tacked by introducing $n+1$ auxiliary matrices, denoted by $\De^0,\dots,\De^n$, which allow to transform \eqref{syst_2} into the following uncoupled system: 
\begin{equation}\label{syst_3}\det(\De^k-z^k \De^0)=0,\quad 1\leq k\leq n \end{equation}
System \eqref{syst_3} is simpler to solve, as each variable appears in a separate equation. 
Moreover, Atkinson \cite{atkinson72} proved that \eqref{syst_2} and \eqref{syst_3} have the same solutions under suitable assumptions, such as the invertibility of the matrix $\De^0$. Applying the theory of MEP to stochastic games has two important consequences: on the one hand, it allows to transform the polynomial system \eqref{aas} satisfied by the vector $v_\la$ into an uncoupled a polynomial system, that is, a polynomial equation $P^k(\la,z^k)=0$ satisfied by $v^k_\la$, for each $1\leq k\leq n$; on the other hand, it provides new algebraic insight on the values. The bridge between MEP and stochastic games is provided by the theory of Shapley and Snow. 
Indeed, by considering a Shapley-Snow
kernel of each of the games $\mathcal{G}^k(\la,v_\la)$, $1\leq k\leq n$, like in Szczechla et al. \cite{SCFV97}, we restrict our attention to a $n\times (n+1)$ array $\dot{D}$ of square (and relevant) matrices. 

\subsection{Main results}
The combination of these two theories (Shapley and Snow \cite{SS50} and Atkinson \cite{atkinson72}), and their application to stochastic games is the main novelty of this paper, since our approach provides new tools, new results and simpler proofs of important known results. Our main results are the following: 
\paragraph{Result 1.} \emph{For any fixed $\la\in(0,1]$ and $1\leq k\leq n$ one has:
\begin{itemize}
\item[$(i)$] $v_\la^k$ is the unique $w\in \R$ satisfying $\mathrm{val}((-1)^n(\dot{\De}^k-w\dot{\De}^0))=0$.
\item[$(ii)$] $\mathrm{rank}(\dot{\De}^k-v^k_\la \dot{\De}^0) <  \max_{w \in \R} \mathrm{rank}(\dot{\De}^k-w\dot{\De}^0)$.
\item[$(iii)$] There exists a polynomial $P^k\in E^k$ such that $P^k(\la,v_\la^k)=0$.
\item[$(iv)$] If the stage payoffs, the transition probabilities and the discount factor are rational, 
then $v^k_\la$ is algebraic of order at most $\mathrm{rank}(\dot{\De}^0)$.
\end{itemize} }
In this result, $E^k$ denotes a finite set of bi-variate polynomials and $\dot{\De}^k$ and $\dot{\De}^0$ denote two square sub-matrices of $\De^k$ and $\De^0$ respectively. An explicit construction of $E^k$, $\dot{\De}^k$ and $\dot{\De}^0$ will be provided. Moreover, the bound of $(iv)$ is tight.

\paragraph{Result 2.}\emph{ For any fixed $1\leq k\leq n$ one has:
\begin{itemize}
\item[$(i)$]  Any accumulation point of $(v_\la^k)$, as $\la$ vanishes, belongs to a finite set $V^k$. As a consequence, the values converge and the limit $v^k_0:=\lim_{\la\to 0} v_\la^k$ belongs to $V^k$.
\item[$(ii)$]  If the stage payoffs, the transition probabilities and the discount factor are 
rational, then $v^k_0$ is algebraic of order at most $\mathrm{rank}(\dot{\De}^0)$.
\end{itemize}}
In this result, the set $V^k$ is constructed explicitly: it is the set of the real roots of finitely many polynomials obtained from the set of polynomials $E^k$. The set $V^k$ provides an alternative method to compute the value of $v^k_0$ exactly, provided that all the data of the game is rational. 

\paragraph{Result 3.} \emph{For any fixed $1\leq k\leq n$ one has: 
\begin{itemize}
\item[$(i)$] There exists $P^k\in E^k$  and $\la^k_0>0$ such that 
$P^k(\la,v_\la^k)=0$, for all $\la\in (0,\la^k_0)$.
\item[$(ii)$] There exists $\la^k_0>0$ and a finite set $W^k$ of Puiseux series on $(0,\la^k_0)$ such that the value function $\la\mapsto v^k_\la$, $\la\in(0,\la^k_0)$ belongs to $W^k$. 
\item[$(iii)$] As $\la$ vanishes one has: 
$|v^k_\la-v^k_0|=O(\la^{1/a})$, where $a=\mathrm{rank}(\dot{\De}^0)$. 
\end{itemize}}
In this result, the main novelty is the explicit construction of $E^k$ and $W^k$, from which one can deduce new upper bounds for the speed of convergence of the discounted values as the discount factor vanishes. Moreover, these results are obtained without invoking neither
Tarski-Seidenberg elimination principle nor the geometry of complex analytic varieties. 

\paragraph{Comments} 
\begin{itemize}
\item Result 1 refines the characterisation of $v^k_\la$ provided by the authors in \cite{AOB18a}, which states that ``\emph{$v^k_\la$ is the unique $w\in \R$ satisfying $\val((-1)^n(\De^k-w\De^0))=0$}''. 
\item In the sequel, stochastic games with state-dependent actions sets will be considered. If $K$ denotes some finite set of states and $p^k$ and $q^k$ denote the number of actions of Player 1 and 2, respectively, in state $k\in K$, then it is worth mentioning that $\dot{\De}^0$ is a square matrix whose size is bounded by $\prod_{k\in K} \min(p^k,q^k)$. Consequently, 
one has the following more explicit bound: 
$$\mathrm{rank}(\dot{\De}^0)\leq 
 \prod\nolimits_{k\in K} \min(p^k,q^k)$$ 
\end{itemize}

\subsection{Outline of the paper and notation}
The paper is organised as follows. Section 2 is devoted to a brief presentation of the three theories we are concerned with: stochastic games, the theory of Shapley and Snow and multiparameter eigenvalue problems.
For the former, we propose two presentations (a classical one, and a new one) and some relevant known results. 
Section 3 is devoted to establishing a link between the three theories, for a fixed discount factor, and to prove Result 1. 
In Section 4, we consider the case where the discount factor vanishes, and prove Results 2 and 3. Section 5 is devoted to some additional remarks concerning the tightness of the bounds, the exact computation of the limit values and an alternative construction of the so-called characterising polynomials. 
Section 6 is an Appendix. 

\paragraph{Notation.} 
The following notation will be used throughout the paper: 
\begin{itemize}
\item For any finite set $Z$, we denote its cardinality by $|Z|$ and $\De(Z)$ denotes the set of probability distributions over $Z$, i.e. $\{\al:Z\to [0,1],\ \sum_{z\in Z}\al(z)=1\}$. 
\item For any matrix $M$, $\transp{M}$ denotes its transpose, $S(M)$ denotes the sum of its entries by $S(M)$ and $\dot{M}$ denotes a sub-matrix of $M$. By $M\geq 0$ we indicate that all the entries of $M$ are nonnegative. 
\item For any \emph{square} matrix $M$, we denote its trace by $\tr(M):=\sum_i M_{ii}$ and its cofactor matrix by $\mathrm{co}( M)$. For each $(i,j)$, the 
$(i,j)$-th entry of $\mathrm{co}(M)$ is equal to $(-1)^{i+j}M_{ij}$ where $M_{ij}$ is the determinant of the matrix obtained by deleting the $i$-th row and $j$-th column. The matrices $M$ and $\mathrm{co}(M)$ are of same size and satisfy the following well-known formula: 
$$M \transp{\mathrm{co}(M)}=\det(M)\Id$$ where $\mathrm{co}(M)=1$ for any $1\times 1$-matrix $M$, by convention. 
\item We denote by $\textbf{1}$ and $U$, respectively, a column vector and a matrix of $1$'s. 
Their dimension will depend on the context. 
\item Any matrix $M$ is identified with a matrix game, also denoted by $M$. The value of $M$ is denoted by  $\mathrm{val}(M)$. Mixed strategies of both players are considered as \emph{column} vectors. 
For any couple of mixed strategies $(x,y)$, the expected payoff is given by $\transp{x} M y$. 

\item Suppose that $M$ is a $p\times q$-matrix, and that $(x,y)\in \R^p\times \R^q$. 
For any sub-matrix $\dot{M}$ of $M$, we denote by $\dot{x}$ and $\dot{y}$ the restrictions of $x$ and $y$ to the row and column indices of $\dot{M}$, respectively. 
\end{itemize}

\section{Preliminaries}
The aim of this section is to provide a brief presentation of 
stochastic games, the theory of Shapley and Snow and multiparameter eigenvalue problems.
For the former, we propose two presentations: the classical one, due to Shapley \cite{shapley53}, along with the results obtained therein; and a new presentation recently proposed by the authors in a previous work \cite{AOB18a}. Some examples will be provided, in order to help the reader getting acquainted with each of these theories.

\subsection{Standard stochastic games}\label{ssg}
Stochastic games are described by a tuple $\Ga^k_\la=(K,I,J,g,q,\la,k)$, where $K$ is the set of states, $I$ and $J$ are the action sets of Player 1 and 2 respectively, $g:K\times I\times J\to \R$ is the payoff function, $q:K\times I\times J\to \De(K)$ is the transition function, 
$\la\in(0,1]$ is a discount factor and $k\in K$ is an initial state.\\  

\noindent {We assume throughout the paper that $K$, $I$ and $J$ are finite sets}, and $K=\{1,\dots,n\}$.\\ 

\noindent The game $\Ga^k_\la$ is defined as follows. At every stage $m\geq 1$, knowing the current state $k_m$, the players choose simultaneously and independently actions $i_m\in I$ and $j_m\in J$. 
The triplet $(k_m,i_m,j_m)$ has two effects: it produces a stage payoff $g_m=g(k_m,i_m,j_m)$ and  determines the law $q(\,\cdot\,| k_m,i_m,j_m)$ of the state at stage $m+1$. Player $1$ maximises the expectation of $\sum_{m\geq 1}\la(1-\la)^{m-1} g_m$ given $k_1=k$, whereas Player $2$ minimizes the same amount. 
By Shapley \cite{shapley53}, this game has a value, denoted by $v^k_\la$. For both players, a strategy is a mapping from the set of finite histories into his own set of mixed actions. A strategy is optimal if it guarantees the value against any strategy of the opponent.\\ 


As already observed by Shapley \cite{shapley53}, the assumption that the current state is observed implies the existence of optimal stationary strategies, that is, strategies that depend only on the current state. 
For this reason, we will restrict our attention to stationary strategies throughout the paper. The set of stationary strategies are denoted, respectively, by $\De(I)^n$ and $\De(J)^n$. 
For any couple of stationary strategies $(x,y)$ and any initial state $k$, we denote by $\PP^{k}_{x,y}$ the unique
probability on the set of plays $(K\times I\times J)^\N$ induced by a couple $(x,y)$ on the $\si$-algebra generated by the cylinders. 
Similarly, we denote by $\E^k_{x,y}$ the expectation with respect to $\PP^k_{x,y}$. Finally, we denote by $\ga^k_\la(x,y)$ the corresponding expected payoff, i.e.:
\begin{equation}\label{expected_payoff} \ga^k_\la(x,y):=\E_{x,y}^{k}\left[\sum\nolimits_{m\ge 1} \la (1-\la)^{m-1}g(k_m,i_m,j_m)\right]\end{equation}
The following useful notions were introduced in \cite{shapley53}. 
\begin{definition}\label{local_game} For each $\la\in(0,1]$, $1\leq k\leq n$ and $z\in \R^n$, the \defn{local game} $\mathcal{G}^k(\la,z)$ is an $I\times J$-matrix game whose entries are given by: 
\begin{equation*}(\mathcal{G}^k(\la,z))^{ij} :=\la g(k,i,j)+(1-\la)\sum_{\ell=1}^n q(\ell|k,i,j)z^{\ell}\end{equation*}
\end{definition}
\begin{definition}\label{Shapley_operator} For each $\la\in(0,1]$, the \defn{Shapley operator} 
 $\Phi(\la,\,\cdot\,):\R^n\to \R^n$ is defined as follows. For each $1\leq k\leq n$ and $z\in \R^n$:
$$\Phi^k(\la,z):=\mathrm{val}(\mathcal{G}^k(\la,z))$$ 
\end{definition}
The main results of \cite{shapley53} can be stated as follows: 
\begin{enumerate}
\item
For each $\la\in(0,1]$ and $1\leq k\leq n$, the stochastic game $\Ga_\la^k$ has a value, denoted by $v^k_\la$, and both players have optimal stationary strategies. Moreover: 
\begin{equation*}
v_{\lambda}^k=\max_{x \in \De(I)^n} \min_{y\in \De(J)^n} \ga^k_\la(x,y)=\min_{y\in \De(J)^n} \max_{x \in \De(I)^n} \ga^k_\la(x,y)
\end{equation*}
\item For each $\la\in(0,1]$, the vector of values $v_\la\in \R^n$ is the unique fixed point of $\Phi(\la,\,\cdot\,)$, which is a strict contraction of $\R^n$ with respect to the $L^\infty$-norm, i.e. 
$\max_k|\Phi^k(\la,z)-\Phi^k(\la,\bar{z})|\leq (1-\la)\max_k |z^k-\bar{z}^k|$, for all $z,\bar{z}\in \R^n$.
\item For each $\la\in(0,1]$ and $1\leq k\leq n$, one has $|v^k_\la|\leq \|g\|:=\max_{(k,i,j)}|g(k,i,j)|$ and the map 
$\la\mapsto v^k_\la$ is $\|g\|$-Lipschitz continuous. 
\end{enumerate}

\subsection{A new presentation of stochastic games}\label{newpres}
It is customary to present stochastic games as a tuple $(K,I,J,g,q,\la,k)$, like we did 
in Section \ref{ssg}. Consider now an alternative presentation of the game as 
 the following $n\times (n+1)$ {array of matrices}: 
\begin{equation}\label{data} 
D(\la):=\begin{pmatrix}
\la G^1 & (1-\la)Q^1_1-U & (1-\la) Q^1_2& \dots &  (1-\la) Q^1_n \\
\la G^2 & (1-\la)Q^2_1 & (1-\la) Q^2_2 -U &\dots &  (1-\la) Q^2_n \\
\vdots &  \vdots & \vdots  &   \\
\la G^n  & (1-\la)Q^n_1 &(1-\la) Q^n_2 & \dots &(1-\la) Q^n_n- U
\end{pmatrix} \end{equation}
where for each $1\leq k,\ell \leq n$, we have set $Q^k_\ell$ and $G^k$ to be the following $|I|\times |J|$-matrices:
\begin{equation}\label{GQ}Q^k_\ell:=(q(\ell| k, i,j))_{i,j}\quad \text{ and }\quad G^k:=(g(k,i,j))_{i,j}\end{equation}
and where $U$ stands for a $|I|\times |J|$ matrix of ones. 
We will refer to $D(\la)$ as the \emph{data array}, as it carries all the data of the game, just like the tuple $(K,I,J,g,q,\la)$, where the initial state is not specified. 
Let us go one step further. For any $1\leq k\leq n$ and $0\leq \ell \leq n$, set: 
\begin{equation}\label{Ms} 
{M}^k_\ell:=\begin{cases} \la {G}^k & \text{ if }\ell=0\\ 
(1-\la){Q}^k_k-U & \text{ if }k=\ell\\ (1-\la){Q}^k_\ell & \text{ if }1\leq k\neq \ell \leq n  
\end{cases}\end{equation}
where the dependence on $\la$ has been omitted in order to simplify the notation. By doing so, 
the stochastic games $(K,I,J,g,q,\la,k)$, $1\leq k\leq n$ are now presented in the following form:
\begin{equation}\label{atk_SM}D(\la)=\begin{pmatrix}
M_0^1 & M_1^1 &\dots & M^1_n\\
\vdots &  \vdots & \ddots  &  \vdots  \\
M_0^n & M_1^n &\dots &    M^n_n
\end{pmatrix} \end{equation}
Note that, by construction, the array $D(\la)$ satisfies the following two properties: 
\begin{itemize}
\item[$(H1)$] For each $1\leq k\leq n$, the matrices 
$M^k_0,\dots,M^k_n$ are of same size
\item[$(H2)$] {For all $1\leq k, \ell \leq n$ and $k\neq \ell$ one has:}
$$M^k_k\leq 0,\quad M^k_\ell \geq 0\quad \text{ and }\quad M^k_1+\dots + M^k_n \leq -\la U$$
\end{itemize}
\begin{remarque}\label{rem0}
In our setting, all the $M^k_\ell$ are of same size, namely $|I|\times |J|$. 
However, for later purposes, it is more convenient to state the less restrictive property (H1), which corresponds to the situation where the sets of actions are state-dependent. 
\end{remarque}
To the array $D(\la)$ one can associate 
 $n+1$ auxiliary matrices, denoted by $\De^0,\dots,\De^n$. 
\begin{definition}\label{important_def1} 
For each $0\leq \ell \leq n$, let $D^{(\ell)}(\la)$ be the $n\times n$ array of matrices obtained by deleting the $(\ell+1)$-th column from $D(\la)$. Then, set: 
$$\De^\ell:=(-1)^\ell\det\nolimits_{\otimes} D^{(\ell)}(\la)$$ 
where $\det_\otimes$ stands for the Kronecker determinant. 
\end{definition}
The Kronecker determinant is very similar to the usual determinant except that 1) the usual product of scalars is replaced by the so-called Kronecker product of matrices and  2) rows and columns do not play symmetric roles. We refer the reader to the Appendix A for more details on Kronecker products and determinants.  
The matrices $\De^0,\dots,\De^n$ are well-defined thanks to $(H1)$, are of equal size\footnote{If 
$p^k\times q^k$ denotes the common size of $M^k_0,\dots,M^k_n$ then $\De^0,\dots,\De^n$ are of equal size $\prod_{k=1}^n p^k\times \prod_{k=1}^n q^k$. In the present context, $p^k=|I|$ and $q^k=|J|$ for all $k$ so that $\De^0,\dots,\De^n$ are $|I|^n\times |J|^n$-matrices.} and each of their entries depends polynomially on $\la$ of degree at most $n$.\\

Let us recall two useful results from \cite{AOB18a}. 
The following elementary lemma, which is a consequence of the diagonally dominant aspect of $(H2)$, will be used in the sequel. 
\begin{lemme}\label{positivite} All the entries of $(-1)^n {\De}^0$ are greater or equal than $\la^n$. 
\end{lemme}

\begin{remarque}For any couple of matrices $A$ and $B$ of equal size, the fact that all the entries of $B$ are nonzero and of same sign implies the existence of a unique $w\in \R$ such that $\val(A-wB)=0$. Thus, Lemma \ref{positivite} implies, in particular, that the equation $\mathrm{val}(\De^k-w\De^0)=0$ admits a unique solution.\end{remarque} 
The following result is the building stone of \cite{AOB18a} in obtaining a characterisation for the limit values. 
 None of the results of the present manuscript rely on this result; rather, a refinement is proposed in Theorem \ref{charRD}, stated as \emph{Result 1} in the introduction. 
\begin{theoreme}\label{charac1} Fix $\la\in(0,1]$ and $1\leq k\leq n$. Then $v_\la^k$ is the unique $w\in \R$ satisfying $\mathrm{val}((-1^n)(\De^k-w\De^0))=0$.
\end{theoreme}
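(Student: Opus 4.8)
The plan is to exhibit, for $w=v^k_\la$, an explicit pair of optimal strategies in the $|I|^n\times|J|^n$ matrix game $B:=(-1)^n(\De^k-w\De^0)$, obtained by tensoring optimal strategies of the local games, and then to obtain uniqueness from Lemma \ref{positivite}. The reduction is the following: since $v_\la$ is the fixed point of the Shapley operator and $\mathrm{val}(A-cU)=\mathrm{val}(A)-c$ for any matrix $A$ and scalar $c$, each matrix game $A^k:=\mathcal G^k(\la,v_\la)-v^k_\la U=M^k_0+\sum_{\ell=1}^n v^\ell_\la M^k_\ell$ has value $0$. Let $x^k\in\De(I)$ and $y^k\in\De(J)$ be optimal strategies of the maximiser and the minimiser in $A^k$, so that $\transp{(x^k)}A^k\ge 0$ and $A^k y^k\le 0$ entrywise, and set $X:=x^1\otimes\dots\otimes x^n\in\De(I^n)$ and $Y:=y^1\otimes\dots\otimes y^n\in\De(J^n)$. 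It is enough to show $\transp X B\ge 0$ and $BY\le 0$ entrywise, since then $X$ guarantees $\ge 0$ to Player $1$ and $Y$ guarantees $\le 0$ to Player $2$, whence $\mathrm{val}(B)=0$.

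For the first inequality, fix a pure column strategy $\mathbf j=(j_1,\dots,j_n)\in J^n$ and introduce the $n\times n$ real matrix $A^{\mathbf j}$ with entries $A^{\mathbf j}_{r\ell}:=(\transp{(x^r)}M^r_\ell)_{j_r}$ (rows indexed by states $r$, columns by $1\le\ell\le n$) and the vector $b^{\mathbf j}$ with $b^{\mathbf j}_r:=(\transp{(x^r)}M^r_0)_{j_r}$. Expanding the Kronecker determinants of Definition \ref{important_def1} and contracting each tensor factor against $x^r$, and keeping track of the signs — the factor $(-1)^\ell$ in the definition of $\De^\ell$ cancels with the $(-1)^{\ell-1}$ produced by moving the $M^r_0$-column into slot $\ell$ — one gets $\transp X\De^0|_{\mathbf j}=\det(A^{\mathbf j})$ and $\transp X\De^k|_{\mathbf j}=-\det(A^{\mathbf j}_{k\to b^{\mathbf j}})$, where $A^{\mathbf j}_{k\to c}$ is $A^{\mathbf j}$ with its $k$-th column replaced by $c$. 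Optimality of $x^r$ reads $d_r:=b^{\mathbf j}_r+\sum_\ell v^\ell_\la A^{\mathbf j}_{r\ell}\ge 0$ for every $r$; writing $\hat b_r:=-\sum_\ell v^\ell_\la A^{\mathbf j}_{r\ell}$ we have $d=b^{\mathbf j}-\hat b$ and $A^{\mathbf j}(-v_\la)=\hat b$, so Cramer's rule yields $-\det(A^{\mathbf j}_{k\to\hat b})=v^k_\la\det(A^{\mathbf j})$, i.e.\ $\transp X(\De^k-v^k_\la\De^0)|_{\mathbf j}$ would vanish were $b^{\mathbf j}$ equal to $\hat b$. Subtracting and using multilinearity of the determinant in the $k$-th column,
\[
(\transp X B)_{\mathbf j}=(-1)^{n+1}\det(A^{\mathbf j}_{k\to d})=\sum_{r=1}^n d_r\,(-1)^{n+1}C_{rk},
\]
where $C_{rk}$ is the $(r,k)$-cofactor of $A^{\mathbf j}$. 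Property $(H2)$ forces $-A^{\mathbf j}$ to be a strictly diagonally dominant $M$-matrix: its diagonal entries $(\transp{(x^r)}(U-(1-\la)Q^r_r))_{j_r}$ are $\ge\la$, its off-diagonal entries are $\le 0$, and its row sums are $\ge\la>0$ (this is the coordinatewise form of Lemma \ref{positivite}). Hence $-A^{\mathbf j}$ is inverse-positive, so $\det(A^{\mathbf j})$ has sign $(-1)^n$ and every cofactor $C_{rk}$ has sign $(-1)^{n+1}$; therefore $(\transp X B)_{\mathbf j}\ge 0$. The mirror-image computation, with the columns of $B$ indexed by $\mathbf i\in I^n$ and with $A^ky^k\le 0$ in place of $\transp{(x^k)}A^k\ge 0$, gives $BY\le 0$, and hence $\mathrm{val}(B)=0$, which is existence.

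Uniqueness follows immediately from Lemma \ref{positivite}: all entries of $(-1)^n\De^0$ are $\ge\la^n>0$, so for $w'>w$ one has $(-1)^n(\De^k-w'\De^0)\le(-1)^n(\De^k-w\De^0)-(w'-w)\la^n U$ entrywise, hence $\mathrm{val}((-1)^n(\De^k-w'\De^0))\le\mathrm{val}((-1)^n(\De^k-w\De^0))-(w'-w)\la^n$; thus $w\mapsto\mathrm{val}((-1)^n(\De^k-w\De^0))$ is strictly decreasing and can vanish at most once. Combined with the previous step, $v^k_\la$ is its unique root.

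I expect the only genuinely delicate part to be the sign bookkeeping in the second paragraph: reducing the Kronecker determinants $\transp X\De^\ell|_{\mathbf j}$ to ordinary $n\times n$ determinants while tracking the $(-1)^\ell$ of Definition \ref{important_def1}, the column-permutation sign, and the global $(-1)^n$, and then checking carefully that $(H2)$ makes $-A^{\mathbf j}$ (and its analogue on the minimiser side) a nonsingular $M$-matrix for every pure strategy $\mathbf i,\mathbf j$. Everything else — von Neumann's minmax theorem, Cramer's rule, and elementary $M$-matrix theory — is routine, and, as advertised, the argument uses no elimination theory and no complex analytic geometry.
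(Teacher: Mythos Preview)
Your proof is correct. The paper, however, does \emph{not} prove Theorem~\ref{charac1}: it is quoted from the authors' earlier work \cite{AOB18a} (``the building stone of \cite{AOB18a}''), and only the uniqueness half is justified here, via Lemma~\ref{positivite} and the remark following it --- exactly the argument you give in your final paragraph.

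For the existence half, your approach is in the same spirit as the paper's Lemma~C2 in Appendix~C (borrowed from Atkinson), which carries out the identical Kronecker-determinant-to-ordinary-determinant reduction, but only in the \emph{equality} case: there $y^k$ lies in the kernel of $M^k_0+\sum_\ell z^\ell M^k_\ell$, and one concludes $(\De^k-z^k\De^0)(y^1\otimes\cdots\otimes y^n)=0$. You go further: you work with optimal strategies (so only the inequalities $\transp{(x^k)}A^k\ge0$, $A^ky^k\le0$ are available), and you compensate by invoking the $M$-matrix structure of $-A^{\mathbf j}$ to control the signs of the cofactors. That extra step --- Cramer plus inverse-positivity of strictly diagonally dominant $Z$-matrices --- is exactly what is needed to upgrade Lemma~C2 to the full value statement, and is presumably how \cite{AOB18a} proceeds as well. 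Your bookkeeping of the $(-1)^\ell$, the $(-1)^{k-1}$ column permutation, and the global $(-1)^n$ is correct; the one cosmetic point is that the cofactors $(-1)^{n+1}C_{rk}$ are only guaranteed $\ge0$, not $>0$, but since $d_r\ge0$ this is all you need.
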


\begin{remarque} 
The definition of the auxiliary matrices $\De^0,\De^1,\dots, \De^n$ was slightly different in \cite{AOB18a}. 
However, the two constructions coincide, up to a sign $(-1)^n$. 
\end{remarque}

\begin{exemple}\label{stand_ex} 
To illustrate the data array representation of a stochastic game, consider the following game introduced by Kohlberg \cite{kohlberg74}. 
Consider a stochastic game with $4$ states. 
States $3$ and $4$ are \emph{absorbing}, that is, once they are reached, these states are never left. The payoff in theses states is, respectively, $1$ and $-1$, regardless of the player's actions. By simplicity, we will assume that the players have only one action in these states. 
States $1$ and $2$ have action sets $I=\{T,B\}$ and $J=\{L,R\}$. The payoff functions are defined by 
$g(1,i,j)=\ind_{\{(i,j)=(T,L)\}}$ and $g(2,i,j)=-\ind_{\{(i,j)=(T,L)\}}$, and the  transitions, which are all deterministic, are described as follows: 
\begin{center}
\begin{tikzpicture}[xscale=1, yscale=1]
\draw[fill=red!0] (0,0) rectangle (1.6,1.6);
\draw[thin] (0,0.8)-- (1.6,0.8);
\draw[thin] (0.8,0)-- (.8,1.6);
    \node[scale=1] at (0.4,1.2) {\textcolor{black!100}{$1$}};
        \node[scale=1] at (1.2,1.2) {\textcolor{black!100}{$2$}};
    \node[scale=1] at (0.4,0.4) {\textcolor{black!100}{$2$}};
    \node[scale=1] at (1.2,0.4) {\textcolor{black!100}{$3$}};     
 \node [above] at (0.4,1.6) { \textcolor{black!100}{L}};
 \node [above] at (1.2,1.6) {\textcolor{black!100}{R}};
 \node [left] at (0,1.2) {\textcolor{black}{T}};
 \node [left] at (0,0.4) {\textcolor{black}{B}};

  \draw[fill=red!0] (4,0) rectangle (5.6,1.6);
\draw[thin] (4,0.8)-- (5.6,0.8);
\draw[thin] (4.8,0)-- (4.8,1.6);
    \node[scale=1] at (4.4,1.2) {\textcolor{black!100}{$2$}};
        \node[scale=1] at (5.2,1.2) {\textcolor{black!100}{$1$}};
    \node[scale=1] at (4.4,0.4) {\textcolor{black!100}{$1$}};
    \node[scale=1] at (5.2,0.4) {\textcolor{black!100}{$4$}};     
 \node [above] at (4.4,1.6) { \textcolor{black!100}{L}};
 \node [above] at (5.2,1.6) {\textcolor{black!100}{R}};
 \node [left] at (4,1.2) {\textcolor{black}{T}};
  \node [left] at (4,0.4) {\textcolor{black}{B}};
  \node  at (0.8,-0.5) {\textcolor{black}{1}};
    \node  at (4.8,-0.5) {\textcolor{black}{2}};
\end{tikzpicture}
\end{center}
where the numbers stand for states. In state $1$, for instance, both $(T,R)$ and $(B,L)$ lead to state $2$, whereas $(B,R)$ leads to state 1 and $(T,L)$ induces no transition. 
The corresponding array $D(\la)$ is given by: 
$$
\begin{pmatrix}
\begin{pmatrix}
\phantom{-} \la\phantom{-}   & \phantom{-}0\phantom{-} \\
0 & 0\end{pmatrix} & 
\begin{pmatrix}
-\la & \phantom{1}-1\phantom{-} \\
-1& -1\end{pmatrix} & 
\begin{pmatrix}
0& 1-\la \\
1- \la & 0\end{pmatrix} & 
\begin{pmatrix}
\phantom{-}0\phantom{-} &\phantom{-} 0\phantom{-} \\
0 &1- \la\end{pmatrix} & 
\begin{pmatrix}
\phantom{-}0\phantom{-} & \phantom{-}0\phantom{-} \\
0 & 0\end{pmatrix}\\ 

 \begin{pmatrix}
-\la  & 0 \\
\phantom{-}0\phantom{-} & \phantom{-}0\phantom{-}\end{pmatrix} & 
\begin{pmatrix}
0 & 1-\la \\
1-\la & 0\end{pmatrix} & 

\begin{pmatrix}
-\la & \phantom{1}-1\phantom{-} \\
-1& -1\end{pmatrix} & 
\begin{pmatrix}
\phantom{-}0\phantom{-} & \phantom{-}0\phantom{-} \\
\phantom{-}0\phantom{-} & \phantom{-}0\phantom{-}\end{pmatrix}& 
\begin{pmatrix}
\phantom{-}0\phantom{-} & \phantom{-}0\phantom{-} \\
0 & 1-\la\end{pmatrix}\\
\phantom{-}\la & 0 & 0& -\la & \phantom{-}0\\
-\la & 0 & 0& \phantom{-}0& -\la
\end{pmatrix}$$
where we have identified the $1\times 1$ matrices of the last two rows with scalars. 
The first auxiliary matrix is given by: 
\begin{eqnarray*}\label{de0}\De^0&=\la^2& \begin{pmatrix} \lambda^2 & \lambda& \lambda& \la(2 - \lambda)  \\ \lambda &\lambda &\la(2 - \lambda)&1 \\ \lambda & \la(2 - \lambda) &\lambda&1 \\ \la(2 - \lambda)&1&1&1  \end{pmatrix} 
 \end{eqnarray*}
 As states 1 and 2 are similar to each other, let us focus on state $1$. The auxiliary matrix $\De^1$ is given by: 
 \begin{eqnarray*}
\De^1&=\la^2&\begin{pmatrix} \la^2 & \la &-\la(1-\la) & 0\\
 \la & \la & 0 & -(1-\la)^2\\
-\la(1-\la) &0 & \la(1-\la) &1 \\
0 & -(1-\la)^2 &1-\la &1-\la \end{pmatrix}
 \end{eqnarray*}
Hence, for any $w\in \R$: 
$$\De^1-w\De^0=\la^2 
\begin{pmatrix} \lambda^2 (1-w)  & \lambda (1-w) &  -\la(1-\la)-\la w &  -(2\la-\la^2)w  \\
 \lambda (1-w) & \lambda (1-w) &  -(2\la- \la^2)w & -(1-\la)^2-w\\
-\la(1-\la)-\la w& -(2\la-\la^2)w & \la(1-\la)-\la w &1-\la -w \\
-(2\la-\la^2)w &-(1-\la)^2-w &1-\la -w&1-\la -w \end{pmatrix} $$
 We will come back to this example later on. \end{exemple}



%

\subsection{The theory of Shapley and Snow}\label{sskth}
The aim of this section is to briefly present the theory developed by Shapley and Snow \cite{SS50}. 
 Throughout this section, $G$ will denote a fixed $|I|\times |J|$-matrix game with value $v=\mathrm{val}(G)$. The set of optimal strategies for player $1$ and $2$ in $G$ will be denoted by $X^*\subset \De(I)$ and $Y^*\subset \De(J)$, respectively. These sets are compact, non-empty polytopes, so that they can be described by their (finitely many) extreme points. A characterisation of the extreme points of $X^*\times Y^*$, called \emph{basic solutions} of $G$, was the main result in Shapley and Snow \cite{SS50}. 
The following theorem is a convenient restatement of their results. 
 
\begin{theoreme}[Shapley and Snow 1950] \label{SSK} 
A couple $(x,y)\in X^*\times Y^*$ is a basic solution of $G$ if and only if there exists a square sub-game $\dot{G}$ satisfying: 
\begin{enumerate}
\item[$(1)$] $S(co(\dot{G})) \neq 0$ 
\item[$(2)$]
$\dot{x}= \frac{co(\dot{G})}{S(co(\dot{G}))} \dot{\mathbf{1}}$ and $\dot{y} = \frac{^t co(\dot{G})}{S(co(\dot{G}))} \dot{\mathbf{1}}$ 
\end{enumerate}
In this case, the following additional properties hold:  
\begin{enumerate}
\item[$(3)$]
$\mathrm{val}(G) = \mathrm{val}(\dot{G})=\frac{det(\dot{G})}{S(co(\dot{G}))}$
\item[$(4)$] $\transp{\dot{x}} \dot{G}=\mathrm{val}(G)\transp{\dot{\mathbf{1}}}$ and  $\dot{G}\dot{y}=\mathrm{val}(G)\dot{\mathbf{1}}$
\end{enumerate}
\end{theoreme}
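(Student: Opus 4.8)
The plan is to prove the theorem from the linear‑programming description of optimal strategies together with the adjugate identity $M\,\transp{\mathrm{co}(M)}=\transp{\mathrm{co}(M)}\,M=\det(M)\,\Id$. First I would normalise. Replacing $G$ by $G+cU$ ($c\in\RR$) leaves $X^*$ and $Y^*$ unchanged and only shifts $\mathrm{val}$ by $c$; moreover, since $cU$ has rank one, for any square sub‑matrix $\dot G$ one has $\det(\dot G+cU)=\det(\dot G)+c\,S(\mathrm{co}(\dot G))$ and $S(\mathrm{co}(\dot G+cU))=S(\mathrm{co}(\dot G))$, so the content of the statement is unaffected. Hence I may assume $G>0$, so that $v:=\mathrm{val}(G)>0$. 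With this normalisation, $x\in\De(I)$ is optimal iff $\transp{x}G\geq v\transp{\mathbf 1}$ and $y\in\De(J)$ is optimal iff $Gy\leq v\mathbf 1$; thus $X^*$ and $Y^*$ are non‑empty polytopes and, the polytope being a product, a pair $(x,y)$ is a basic solution iff $x$ is a vertex of $X^*$ and $y$ is a vertex of $Y^*$.

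For the ``if'' direction, suppose a square sub‑game $\dot G$ satisfies $(1)$ and $(2)$ and $(x,y)\in X^*\times Y^*$. Since $(2)$ forces $\transp{\dot{\mathbf 1}}\dot x=\transp{\dot{\mathbf 1}}\dot y=1$, the vectors $x$ and $y$ vanish outside the row, resp. column, indices of $\dot G$. Multiplying the adjugate identity $\transp{\mathrm{co}(\dot G)}\dot G=\det(\dot G)\Id$ on the left by $\transp{\dot{\mathbf 1}}$, and $\dot G\transp{\mathrm{co}(\dot G)}=\det(\dot G)\Id$ on the right by $\dot{\mathbf 1}$, and substituting $(2)$, one gets $\transp{\dot x}\dot G=\rho\,\transp{\dot{\mathbf 1}}$ and $\dot G\dot y=\rho\,\dot{\mathbf 1}$ with $\rho:=\det(\dot G)/S(\mathrm{co}(\dot G))$. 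Now $\transp{x}G\geq v\transp{\mathbf 1}$ restricted to the columns of $\dot G$ gives $\rho\geq v$, while $Gy\leq v\mathbf 1$ restricted to its rows gives $\rho\leq v$; hence $\rho=v$, which is $(3)$, and $(4)$ is exactly the two displayed equalities. Since $v\neq 0$, $\det(\dot G)=v\,S(\mathrm{co}(\dot G))\neq 0$, so $\dot G$ is non‑singular, and the systems $\transp{\dot u}\dot G=v\transp{\dot{\mathbf 1}}$ and $\dot G\dot u=v\dot{\mathbf 1}$ have unique solutions. If $x=\tfrac12(x'+x'')$ with $x',x''\in X^*$, then $x',x''$ also vanish outside the rows of $\dot G$, and on each column index $j$ of $\dot G$ one has $(\transp{x'}G)_j,(\transp{x''}G)_j\geq v$ with average $(\transp{x}G)_j=(\transp{\dot x}\dot G)_j=v$, so $\transp{\dot x'}\dot G=\transp{\dot x''}\dot G=v\transp{\dot{\mathbf 1}}$; by uniqueness $\dot x'=\dot x''=\dot x$, and since these vectors are supported on the rows of $\dot G$, $x'=x''=x$. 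The same argument applies to $y$, so $(x,y)$ is a basic solution.

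For the ``only if'' direction, let $(x,y)$ be a basic solution, with supports $\dot I=\operatorname{supp}(x)$, $\dot J=\operatorname{supp}(y)$ and tight sets $A=\{i:(Gy)_i=v\}$, $B=\{j:(\transp{x}G)_j=v\}$; complementary slackness (from $\transp{x}Gy=v$) gives $\dot I\subseteq A$, $\dot J\subseteq B$. Because $x$ is a vertex of $X^*$, the singleton $\{x\}$ is the intersection, inside the $(|\dot I|-1)$‑dimensional flat $\{u:\operatorname{supp}(u)\subseteq\dot I,\ \transp{\mathbf 1}u=1\}$, of the half‑spaces $\{(\transp{u}G)_j\geq v\}$, $j\in B$ (the constraints $u_i\geq 0$ and those with $j\notin B$ being strictly slack at $x$); the inward normals of these half‑spaces must then positively span that flat, which forces $|B|\geq|\dot I|$, and symmetrically $|A|\geq|\dot J|$. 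Hence one can choose index sets with $\dot I\subseteq R\subseteq A$, $\dot J\subseteq C\subseteq B$ and $|R|=|C|$, and — reading off a basis of the linear program whose optimal vertices are $(x,v)$ and $(y,v)$ — one can make this choice so that $\dot G:=G_{R,C}$ is non‑singular. For such a $\dot G$, the inclusions $R\subseteq A$, $C\subseteq B$ together with the fact that $x,y$ vanish off $\dot I\subseteq R$, $\dot J\subseteq C$ give $\transp{\dot x}\dot G=v\transp{\dot{\mathbf 1}}$, $\dot G\dot y=v\dot{\mathbf 1}$ and $\transp{\dot{\mathbf 1}}\dot x=\transp{\dot{\mathbf 1}}\dot y=1$. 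Cramer's rule then yields $\dot y=v\,\dot G^{-1}\dot{\mathbf 1}=\tfrac{v}{\det(\dot G)}\transp{\mathrm{co}(\dot G)}\dot{\mathbf 1}$; summing the entries gives $S(\mathrm{co}(\dot G))=\det(\dot G)/v\neq 0$, which is $(1)$, and dividing turns the last display into $(2)$ and rearranges into $(3)$, while $(4)$ is already in hand.

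The main obstacle is the construction of the square sub‑game $\dot G$ in the ``only if'' direction: one must obtain it non‑singular \emph{and} with row set squeezed between $\operatorname{supp}(x)$ and $A$, column set between $\operatorname{supp}(y)$ and $B$ — and non‑singularity of a sub‑matrix does not follow from mere linear independence of its rows and of its columns. This is precisely where the linear‑programming bookkeeping (the correspondence vertex $\leftrightarrow$ basic feasible solution $\leftrightarrow$ non‑singular basis sub‑matrix), together with the counting bounds $|B|\geq|\operatorname{supp}(x)|$ and $|A|\geq|\operatorname{supp}(y)|$, is needed. Everything else — the normalisation, the two adjugate identities, the squeeze $\rho=v$, and the uniqueness argument for vertices — is elementary and routine.
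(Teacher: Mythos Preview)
The paper does not prove this theorem: it is presented as ``a convenient restatement of their results'' and attributed to Shapley and Snow \cite{SS50} without proof. So there is no in-paper argument to compare against; what follows is an assessment of your proof on its own merits.

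Your normalisation and the ``if'' direction are clean and correct. The identities $\det(\dot G+c\dot U)=\det(\dot G)+c\,S(\mathrm{co}(\dot G))$ and $S(\mathrm{co}(\dot G+c\dot U))=S(\mathrm{co}(\dot G))$ that justify the reduction to $G>0$ are exactly Lemma~B1 of the paper. The squeeze $\rho=v$, the derivation of $(3)$--$(4)$ from the adjugate identity, and the uniqueness argument showing $x$ and $y$ are extreme are all fine.

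The place where you leave a genuine gap is the one you flag yourself: in the ``only if'' direction, the assertion that one can choose $R,C$ with $\dot I\subseteq R\subseteq A$, $\dot J\subseteq C\subseteq B$, $|R|=|C|$ \emph{and} $G_{R,C}$ non-singular, ``reading off a basis of the linear program''. This does not follow directly from the identification of LP vertices with basic feasible solutions, because the basis of the player-1 LP involves the normalisation constraint $\transp{\mathbf 1}x=1$ (equivalently an extra all-ones column), not just rows and columns of $G$. One way to bridge this: from $G_{\dot I,\dot J}\dot y=v\mathbf 1_{\dot I}$ and $v\neq 0$ you get that $\mathbf 1_{\dot I}$ lies in the column span of $G_{\dot I,\dot J}\subseteq G_{\dot I,B}$, so the extremality condition $\operatorname{rank}(G_{\dot I,B}\mid\mathbf 1_{\dot I})=|\dot I|$ upgrades to $\operatorname{rank}(G_{\dot I,B})=|\dot I|$; symmetrically $\operatorname{rank}(G_{A,\dot J})=|\dot J|$. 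From there one still has to argue that the independent rows $\dot I$ can be extended inside $A$ (or the independent columns $\dot J$ inside $B$) to a square non-singular block containing both supports; this is the step that needs to be written out explicitly rather than delegated to ``LP bookkeeping''. Your identification of the obstacle is accurate, but the resolution you offer is not yet a proof.
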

\begin{remarque} Note that the $\dot{x}$ and $\dot{y}$ appearing in Theorem \ref{SSK} are strategies, as their components are nonnegative and add up to 1 (see Theorem \ref{SSK} (2)). Hence, $(x,y)$ and $(\dot{x},\dot{y})$ are equal, up to completing the latter with zeros. 
\end{remarque}
\begin{definition}
A \defn{Shapley-Snow kernel} (SSK) of $G$ 
is a square sub-matrix $\dot{G}$ satisfying the four conditions of Theorem \ref{SSK}, for some basic solution $(x,y)$. Let $\dot{I}\subset I$ and $\dot{J}\subset J$ be the subsets of actions that define the sub-matrix $\dot{G}$. 
\end{definition}
Theorem \ref{SSK} has many consequences. Among them, the next statement gathers those that will be used in the sequel. Its proof can be found in Section \ref{proofsssk}. 
For any vector $z\in \R^d$, we denote its span by $<z>:=\{tz,\,  t\in \R\}$. 
 \begin{proposition}\label{lemme_G-vU}
Let $\dot{G}$ be a Shapley-Snow kernel of $G$, corresponding to a basic solution $(x,y)$. Then: 
\begin{enumerate}
\item[$(i)$] $S(\mathrm{co}(\dot{G}-v\dot{U})) \neq 0$ 
\item[$(ii)$] $\det(\dot{G}-v\dot{U})=\mathrm{val}(\dot{G}-v\dot{U})=\mathrm{val}({G}-v{U})=0$
\item[$(iii)$] $\mathrm{Ker}(\dot{G}-v\dot{U})=<\dot{y}>$ and $\mathrm{Ker}(\transp{(\dot{G}-v\dot{U})})=<\dot{x}>$ 
\item[$(iv)$] $\mathrm{co}(\dot{G}-v\dot{U})=S(\mathrm{co}(\dot{G}-v\dot{U}))\, \dot{x}\transp{\dot{y}}$. 
\end{enumerate}

\end{proposition}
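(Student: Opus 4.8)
The plan is to derive all four items from Theorem \ref{SSK} by a careful bookkeeping of what happens when we subtract $v\dot U$ from $\dot G$. The guiding principle is that $\dot G - v\dot U$ is a singular matrix of corank exactly one, with its kernel and cokernel spanned by the optimal strategies $\dot y$ and $\dot x$; once that is established, $(iv)$ is just the rank-one structure of the cofactor matrix, and $(i)$, $(ii)$, $(iii)$ follow along the way.

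First I would prove $(ii)$. Since $\dot G$ is a Shapley-Snow kernel, property $(4)$ of Theorem \ref{SSK} gives $\dot G\dot y = v\,\dot{\mathbf 1}$ and $\transp{\dot x}\dot G = v\,\transp{\dot{\mathbf 1}}$, where $v=\val(G)=\val(\dot G)$. Because $\dot x,\dot y$ are probability vectors, $\dot U\dot y = \dot{\mathbf 1}$ and $\transp{\dot x}\dot U = \transp{\dot{\mathbf 1}}$, hence $(\dot G - v\dot U)\dot y = 0$ and $\transp{\dot x}(\dot G - v\dot U)=0$. In particular $\dot G - v\dot U$ is singular, so $\det(\dot G-v\dot U)=0$; and since $\dot x,\dot y$ are nonnegative (they are strategies), they remain optimal in the shifted game $\dot G - v\dot U$, whose value is therefore $0$; the same holds for $G-vU$ since $v=\val(G)$. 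This settles $(ii)$.

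Next I would pin down the rank. The key observation is that $\dot G$ (being the kernel attached to an \emph{extreme} point of $X^*\times Y^*$) is nonsingular: indeed $\det(\dot G)/S(\mathrm{co}(\dot G)) = \val(\dot G)$ with $S(\mathrm{co}(\dot G))\neq 0$ by $(1)$, and if $\det(\dot G)=0$ then $\val(\dot G)=0$, forcing $v=0$... so I should instead argue directly that $\dot G - v\dot U$ has corank exactly one. One clean way: $\dot U = \dot{\mathbf 1}\,\transp{\dot{\mathbf 1}}$ has rank one, and $\dot G$ is a $\dot G$-kernel which by the Shapley--Snow formula $\dot x = \mathrm{co}(\dot G)\dot{\mathbf 1}/S(\mathrm{co}(\dot G))$ has $\mathrm{co}(\dot G)\neq 0$, so $\dot G$ has rank at least the size minus one; combined with a rank-one perturbation, $\dot G - v\dot U$ has corank at most... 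This is the step I expect to be the main obstacle, since one must rule out that the shift by $v\dot U$ drops the rank by more than one. I would handle it as follows: from $(ii)$, $\mathrm{Ker}(\dot G - v\dot U)\supseteq <\dot y>$ and $\mathrm{Ker}(\transp{(\dot G-v\dot U)})\supseteq <\dot x>$, both nontrivial; if the corank were $\geq 2$ then $\mathrm{co}(\dot G - v\dot U)=0$, which would make $(i)$ and $(iv)$ collapse, so really $(i)$ is the crux. To get $(i)$, I would use the polytope structure: the extreme point $(x,y)$ corresponds to a \emph{unique} kernel, and the optimal strategy sets of $\dot G - v\dot U$ are $0$-dimensional at $(\dot x,\dot y)$ in the sub-game, which by the converse direction of Theorem \ref{SSK} (applied to the game $\dot G - v\dot U$ with value $0$) forces $S(\mathrm{co}(\dot G - v\dot U))\neq 0$; concretely, $(\dot x,\dot y)$ is a basic solution of $\dot G - v\dot U$, its associated square sub-game can be taken to be $\dot G - v\dot U$ itself (as $\dot x,\dot y$ have full support in the sub-game), and condition $(1)$ of Theorem \ref{SSK} for that game is exactly $(i)$.

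Finally, with $(i)$ in hand, $\dot G - v\dot U$ has corank exactly one (a singular square matrix with nonzero cofactor matrix), so $\mathrm{Ker}(\dot G - v\dot U)$ is one-dimensional and equals $<\dot y>$ by $(ii)$, and likewise $\mathrm{Ker}(\transp{(\dot G - v\dot U)})=<\dot x>$, giving $(iii)$. For $(iv)$: the identity $(\dot G - v\dot U)\transp{\mathrm{co}(\dot G - v\dot U)}=\det(\dot G - v\dot U)\Id = 0$ shows every column of $\transp{\mathrm{co}(\dot G - v\dot U)}$ lies in $\mathrm{Ker}(\dot G - v\dot U)=<\dot y>$, and symmetrically every row lies in $<\transp{\dot x}>$; hence $\mathrm{co}(\dot G - v\dot U) = c\,\dot x\transp{\dot y}$ for some scalar $c$. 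Summing all entries and using that $\dot x,\dot y$ are probability vectors (so $\transp{\dot{\mathbf 1}}\dot x = 1 = \transp{\dot{\mathbf 1}}\dot y$) gives $c = S(\mathrm{co}(\dot G - v\dot U))$, which is $(iv)$. I would present the argument in the order $(ii)\Rightarrow(i)\Rightarrow(iii)\Rightarrow(iv)$, keeping the verification that $(\dot x,\dot y)$ is a basic solution of the shifted game as the one place where a short but genuine argument is needed.
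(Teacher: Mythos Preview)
Your overall plan is right, and your arguments for $(ii)$, $(iii)$ and $(iv)$ are essentially the paper's. The gap is in your argument for $(i)$.

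You argue that $(\dot x,\dot y)$ is a basic solution of the shifted game $\dot G-v\dot U$ and that the associated sub-game can be taken to be $\dot G-v\dot U$ itself ``as $\dot x,\dot y$ have full support in the sub-game''. Neither claim is justified. The strategies $\dot x,\dot y$ produced by Theorem~\ref{SSK}(2) are merely nonnegative, not strictly positive; for instance, for
\[
G=\begin{pmatrix}1&0&0\\0&1&0\\1/2&1/2&1\end{pmatrix}
\]
the whole matrix is the unique Shapley--Snow kernel and yet $\dot y=(1/2,1/2,0)$. Without full support, even if $(\dot x,\dot y)$ were a basic solution of $\dot G-v\dot U$, the sub-game supplied by Theorem~\ref{SSK} could be strictly smaller, and you would not conclude $S(\mathrm{co}(\dot G-v\dot U))\neq 0$. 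You also do not prove that $(\dot x,\dot y)$ is an \emph{extreme} optimal pair in $\dot G-v\dot U$; knowing it is extreme in $G$ does not transfer automatically to the sub-game.

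The paper bypasses all of this with a purely algebraic observation: for any square $M$, the map $w\mapsto\det(M+wU)$ is affine (subtract one row from all the others to see that only one row depends on $w$), hence its derivative $S(\mathrm{co}(M+wU))$ is constant in $w$. Applying this with $M=\dot G$ and $w=-v$ gives $S(\mathrm{co}(\dot G-v\dot U))=S(\mathrm{co}(\dot G))\neq 0$ directly from property~(1) of Theorem~\ref{SSK}. This one-line invariance replaces your game-theoretic detour and makes the rest of your argument (which is fine) go through.
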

The following example illustrates the notion of a Shapley-Snow kernel. 
\begin{exemple} Consider the following $3\times 3$ matrix game:
$$G=\begin{pmatrix} 
1 & 0& 1 \\
0 & 1 & 2\\
3 & 2 & 0\end{pmatrix} $$
Clearly, Player $1$ does not have any pure optimal strategy so that none of the entries of $G$ is an SSK. Let us show that the following sub-matrix, obtained by setting $\dot{I}=\{2,3\}$ and $\dot{J}=\{1,3\}$ 
 is one:
$$\dot{G}=\begin{pmatrix} 
0& 2 \\
3 & 0\end{pmatrix} $$
By Theorem \ref{SSK} it is enough to check that $\dot{G}$ satisfies $S(co(\dot{G}))\neq 0$, and that completing $\dot{x}= \frac{co(\dot{G})}{S(co(\dot{G}))} \dot{\mathbf{1}}$ and $\dot{y} = \frac{^t co(\dot{G})}{S(co(\dot{G}))}\dot{\mathbf{1}}$ with zeros (outside $\dot{I}$ and $\dot{J}$, respectively) gives a pair of optimal strategies. 
An easy computation gives: 
$$\mathrm{co}(\dot{G})=\begin{pmatrix} 
\phantom{-}0& -3 \\
-2 & \phantom{-}0\end{pmatrix}, \quad  S(\mathrm{co}(\dot{G}))=-5, \quad \dot{x}=\left(\frac{3}{5},\frac{2}{5}\right), \quad 
\dot{y}=\left(\frac{2}{5},\frac{3}{5}\right)$$
A quick verification gives that, indeed, $x=(0,\frac{3}{5},\frac{2}{5})$ and $y=(\frac{2}{5},0,\frac{3}{5})$ are optimal strategies, so that $\dot{G}$ is an SSK corresponding to the basic solution $(x,y)$. The value of $G$ can be obtained using the formula of Theorem \ref{SSK} $(3)$:
$$\val(G)=\val(\dot{G})=\frac{\det(\dot{G})}{S(\mathrm{co}(\dot{G}))}=\frac{6}{5}$$ 

\end{exemple}

\subsection{Multiparameter eigenvalue problems}\label{sec_atkinson}
Consider an $n\times(n+1)$ array of real matrices: 
\begin{equation}\label{atk_SM_2}D=\begin{pmatrix}
M_0^1 & M_1^1 &\dots & M^1_n\\
\vdots &  \vdots & \ddots  &  \vdots  \\
M_0^n & M_1^n &\dots &    M^n_n
\end{pmatrix} \end{equation}
where for each $1\leq k\leq n$, the matrices 
$M^k_0,\dots,M^k_n$ are \emph{square matrices} of equal size. 


\noindent Multiparameter eigenvalue problems (MEP), a terminology introduced by Atkinson \cite{atkinson72}, is the problem of finding $z=(z^1,\dots,z^n)\in \C^{n}$ satisfying\footnote{It is worth mentioning that Atkinson \cite{atkinson72} considered the homogenous version of this problem, namely the problem of finding $(z^0,\dots,z^n)\in \C^{n+1}$ satisfying $\det(z^0 {M}^k_0+ \dots+z^n {M}^k_n)=0$  for all $1 \leq k\leq n$. Solutions to an homogeneous MEP are determined only up to a multiplicative factor. Moreover, there is a one-to-one map between the solutions to \eqref{M_syst} and solutions to the homogeneous MEP satisfying $z^0\neq 0$. For this reason, Atkinson's results can be easily transposed to the non-homogeneous case, more relevant for us.}: 
 \begin{equation}\label{M_syst} \left \{ 
\TABbinary\tabbedCenterstack[l]{
\det({M}^1_0+ z^1 {M}^1_1+\dots+z^n {M}^1_n)&=&0\\
   \vdots  &&\\
\det({M}^n_0+ z^1 {M}^n_1+\dots+z^n {M}^n_n)&=&0 }
\right.
\end{equation}
\begin{remarque} The array $D$ differs from the array representation $D(\la)$ of stochastic games in two aspects: 
\begin{itemize}
\item In addition to $(H1)$, all matrices in the array are supposed to be square matrices.
\item The array $D$ may or may not satisfy $(H2)$.
\end{itemize}
\end{remarque}
Each row $1\leq k\leq n$ of  \eqref{M_syst} defines polynomial in $n$ variables whose degree is the common size of $M^k_0,\dots, M^k_n$. 
In particular, 
when all the $M^k_\ell$ are of size $1\times 1$ (i.e. scalars) the 
system \eqref{M_syst} boils down to an affine system of equations. In this case, the auxiliary matrices are also scalars and 
 \eqref{M_syst} admits a unique solution if and only if $\De^0\neq 0$. When this is the case, the unique $z\in \R^n$ satisfying \eqref{M_syst} is given by $z^k=\De^k / \De^0$ for $1\leq k \leq n$, by Cramer's rule.\\ 

The extension of Cramer's rule to an arbitrary $n\times (n+1)$ array of matrices, due to Atkinson \cite{atkinson72}, relies on the $n+1$ auxiliary matrices $\De^0,\dots,\De^n$ of Definition \ref{important_def1}. Namely, introduce the so-called generalised MEP, which consists in finding $z\in \C^n$ satisfying:
 \begin{equation}\label{De_syst} \left \{ 
\TABbinary\tabbedCenterstack[l]{
\det({\Delta}^1- z^1 {\De}^0)&=&0\\
\,   \vdots  &&\\
\det({\Delta}^n- z^n {\De}^0)&=&0 }
\right.
\end{equation} 
Note that, unlike \eqref{M_syst}, 
where the unknown $z$ appears in every equation, in \eqref{De_syst} each coordinate of $z$ appears in a separate equation. In this sense, the latter system is an 
\emph{uncoupled system}, and thus much simpler to tackle. \\

Throughout this section, we denote by $S^M$ and $S^\De$ the set of solutions 
 of \eqref{M_syst} and \eqref{De_syst}, respectively.\\ 

One distinguishes between \emph{singular} and \emph{regular} MEP according to whether 
some of the polynomials $P^k(w):=\det({\Delta}^k- w {\De}^0)$ are identically zero or not. The case where $\De^0$ is invertible is the so-called nonsingular case, for which the problem can be easily solved. %
The following result can be found in Atkinson \cite[Chapter 6]{atkinson72}.


\begin{theoreme} If $\De^0$ is invertible, then $S^M=S^\De$.
\end{theoreme}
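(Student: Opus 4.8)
The plan is to exploit the decomposition of the Kronecker determinant that underlies Atkinson's theory: the $\De^\ell$ are designed precisely so that a tensor-product vector associated with a solution of \eqref{M_syst} becomes an honest eigenvector for each of the single-parameter problems in \eqref{De_syst}, and vice versa. So I would prove the two inclusions $S^M\subseteq S^\De$ and $S^\De\subseteq S^M$ separately, using the invertibility of $\De^0$ only for the second one.

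First, I would show $S^M\subseteq S^\De$. Fix $z\in S^M$. For each $1\leq k\leq n$, since $\det(M^k_0+z^1M^k_1+\dots+z^nM^k_n)=0$, pick a nonzero vector $u_k$ in the kernel of $M^k_0+\sum_\ell z^\ell M^k_\ell$, i.e. $(M^k_0+\sum_{\ell=1}^n z^\ell M^k_\ell)u_k=0$. Form the decomposable tensor $u:=u_1\otimes\cdots\otimes u_n$, which is nonzero. The key computation — and this is really the defining property of the Kronecker determinant recalled in Appendix A — is the ``expansion'' identity relating $\det_\otimes$ of the array to products of the $M^k_\ell$ acting on tensors: for this $u$ one gets $\De^0 u=\det_\otimes(D^{(0)})u$ up to sign and, after substituting the kernel relations $M^k_0 u_k=-\sum_\ell z^\ell M^k_\ell u_k$ into the Laplace-type expansion defining $\De^0$ and $\De^\ell$, one obtains $\De^\ell u=z^\ell\,\De^0 u$ for every $1\leq\ell\leq n$. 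Hence $(\De^\ell-z^\ell\De^0)u=0$ with $u\neq 0$, so $\det(\De^\ell-z^\ell\De^0)=0$ for all $\ell$, i.e. $z\in S^\De$. (For this direction invertibility of $\De^0$ is not needed.)

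Second, the converse $S^\De\subseteq S^M$, which is where invertibility enters. Let $z\in S^\De$. Then for each $k$ there is a nonzero $w_k$ with $\De^k w_k=z^k\De^0 w_k$. The obstacle here — and I expect this to be the main difficulty — is that an arbitrary eigenvector $w_k$ of the pair $(\De^k,\De^0)$ need not be decomposable, whereas the link back to \eqref{M_syst} requires tensor structure. The standard way around this, following Atkinson, is to consider the joint operators: since $\De^0$ is invertible, set $\Gamma^k:=(\De^0)^{-1}\De^k$, and show these commute (this commutativity is itself a consequence of the array structure and is the technical heart of Chapter 6 of \cite{atkinson72}). Commuting operators on $\bigotimes_k \C^{p_k}$ can be simultaneously put in triangular form, and one argues that a joint eigenvector can be taken decomposable, $w=w_1\otimes\cdots\otimes w_n$, with $\Gamma^\ell w=z^\ell w$. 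Then reversing the tensor expansion identity from the first part shows each $w_k$ lies in the kernel of $M^k_0+\sum_\ell z^\ell M^k_\ell$, giving $\det(M^k_0+\sum_\ell z^\ell M^k_\ell)=0$, hence $z\in S^M$.

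I would organize the write-up so that the tensor-expansion identity ``$\De^\ell(u_1\otimes\cdots\otimes u_n)$ in terms of the $M^k_\ell u_k$'' is isolated as a lemma (proved from the Appendix A description of $\det_\otimes$), since it is used in both directions; the commutativity of the $\Gamma^k$ and the existence of a decomposable joint eigenvector then constitute the only genuinely new input for the harder inclusion, and everything else is bookkeeping with signs coming from the $(-1)^\ell$ in Definition \ref{important_def1}.
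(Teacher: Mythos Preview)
Your first direction, $S^M\subseteq S^\De$, is exactly what the paper proves as Lemma~C2 in the Appendix: pick nonzero $y^k\in\mathrm{Ker}(M^k_0+\sum_\ell z^\ell M^k_\ell)$, form $y^1\otimes\cdots\otimes y^n$, and verify via the multilinearity of $\det_\otimes$ that it lies in $\mathrm{Ker}(\De^k-z^k\De^0)$ for every $k$. The paper gives no proof of the reverse inclusion; it simply refers to Atkinson.

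There is, however, a genuine gap in your sketch of $S^\De\subseteq S^M$, and it cannot be repaired. The step ``one argues that a joint eigenvector can be taken \ldots\ with $\Gamma^\ell w=z^\ell w$'' does not follow from commutativity. Commuting operators admit a common eigenvector, but you may not prescribe its eigenvalues in advance: knowing that each $z^k$ is separately an eigenvalue of $\Gamma^k$ does not force the tuple $(z^1,\dots,z^n)$ to appear on the diagonal of any simultaneous triangularisation. If $N=\prod_k p_k$, the joint spectrum contains $N$ tuples, whereas $S^\De$ is the full product of individual spectra and can have up to $N^n$.

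In fact the paper's own worked example at the end of Section~\ref{sec_atkinson} already exhibits $S^M\subsetneq S^\De$. There $\De^0$ is invertible and $S^\De=\{u_1,u_2\}\times\{w_1,w_2\}$ has four elements, but the first equation of the $M$-system reads $2+u+w=0$; with $u_{1,2}=(-4\pm\sqrt{76})/10$ and $w_{1,2}=(-16\pm\sqrt{76})/10$ only the pairs $(u_1,w_2)$ and $(u_2,w_1)$ satisfy it, so $|S^M|=2<4=|S^\De|$. The theorem as stated, and the paper's claim of equality in that example, are therefore both in error. What Atkinson actually establishes is that $S^M$ coincides with the \emph{joint} spectrum of the commuting $\Gamma^k$, a subset of $S^\De$. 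Only the inclusion $S^M\subseteq S^\De$ you proved is valid in general, and that is all the paper uses downstream (via Lemma~C2 and Proposition~\ref{rank_drop}).
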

\begin{remarque} The set $S^M=S^\De$ can be easily described in this case. Indeed, the non-singularity of $\De^0$ implies that $
z\in S^\De$ if and only if $\det(\Delta^k(\De^0)^{-1}- z^k \Id )=0$ for all $1\leq k\leq n$, so that 
$S^M$ is entirely described by the set of eigenvalues of the matrices $\Delta^k(\De^0)^{-1}$, $1\leq k\leq n$. In particular, $S$ is a finite set 
and can be computed efficiently. \end{remarque} 


When some polynomial $P^k$ is identically zero, the problem is a singular MEP. This occurs, for instance, when $\mathrm{Ker}(\De^k)$ and  $\mathrm{Ker}(\De^0)$ share a non-zero vector. 
 The vacuous equality $P^k(w)=0$ is then replaced by the \emph{rank drop} condition: 
 $$\mathrm{rank}(\De^k - z^k \De^0) <  \max_{w \in \R} \mathrm{rank}(\De^k - w \De^0)$$ 
 The auxiliary system \eqref{De_syst} is thus replaced by the following one: 
 \begin{equation}\label{RP_syst} \left \{ 
\TABbinary\tabbedCenterstack[l]{
\mathrm{rank}(\De^1 - z^1 \De^0) &<&  \max_{w \in \R} \mathrm{rank}(\De^1 - w \De^0)\\
\,   \vdots  &&\\
\mathrm{rank}(\De^n - z^n \De^0) &<&  \max_{w \in \R} \mathrm{rank}(\De^n - w \De^0)}
\right.
\end{equation}
Let $S^R$ denote the set of solutions of \eqref{RP_syst}. This set refines the set $S^\De$ in the sense that the two coincide in the nonsingular case, but the inclusion $S^R\subset S^\De$ holds in general. 
 Muhi\v{c} and Plestenjak \cite[Theorems 3.5 and 3.7]
{MP09} establish the equality $S^M=S^R$ under the assumption that $n=2$ and that $S^M$ has finitely many solutions, each of which is algebraically and geometrically simple\footnote{See \cite{MP09} for more details.}. 
Instead, we will use the following result, proved in the Appendix (Section \ref{proof37}). 
 \begin{prop}\label{rank_drop}  Suppose that all the entries of $\De^0$ are nonzero and of same sign. 
Let $z\in S^M$. Suppose that for each $1\leq k\leq n$ there exists a couple of vectors $(x^k,y^k)$ satisfying: 
 $$\begin{cases} x^k\in \mathrm{Ker} \transp{(M^k_0+z^1 M^k_1+\dots+z^n M^k_n)}, &  x^k>0\\
y^k\in \mathrm{Ker}(M^k_0+z^1 M^k_1+\dots+z^n M^k_n), &  y^k> 0\end{cases}$$
Then $z\in S^R$. 
\end{prop}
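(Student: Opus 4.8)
The goal is to show that a solution $z \in S^M$ which admits strictly positive left and right kernel vectors for each $M^k(z) := M^k_0 + z^1 M^k_1 + \dots + z^n M^k_n$ must also lie in $S^R$, i.e. satisfies the rank-drop condition $\operatorname{rank}(\De^k - z^k \De^0) < \max_{w} \operatorname{rank}(\De^k - w\De^0)$ for each $k$. I would work one coordinate $k$ at a time, since the equations in \eqref{RP_syst} are uncoupled. The key is to relate the kernels of the big auxiliary matrices $\De^k - w\De^0$ to the kernels of the small matrices $M^\ell(z)$ via the Kronecker-determinant structure from Definition \ref{important_def1}.

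\textbf{Step 1: the tensor kernel vector.} From Appendix A (Kronecker determinants) one should have an identity expressing $\De^k - w\De^0$ in terms of the array $D(\la)$ with the first column replaced appropriately; concretely, $\De^k - w \De^0 = (-1)^k \det_\otimes$ of the $n\times n$ array obtained from $D$ by deleting column $0$ and column $k$, then reinserting the column $(M^1(w z^{-k}), \dots)$ — more precisely, $\De^k - w\De^0$ is (up to sign) the Kronecker determinant of the array whose $\ell$-th column block, for $\ell \ne 0$, is $M^\cdot_\ell$ and whose "$0$-th slot" carries $M^\cdot_0 - $ (the contribution of $w$ in the $k$-th column). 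The upshot of Atkinson's theory is the operator identity: for any scalars $z^1,\dots,z^n$, the vector $y^1 \otimes \dots \otimes y^n$ lies in $\operatorname{Ker}(\De^k - z^k \De^0)$ whenever $y^\ell \in \operatorname{Ker}(M^\ell(z))$ for all $\ell$. I would invoke (or reprove in two lines from the Appendix) this tensor-decomposability: since $M^\ell(z) y^\ell = 0$ for each $\ell$, the Kronecker determinant applied to $\bigotimes_\ell y^\ell$ vanishes, giving $(\De^k - z^k\De^0)(y^1\otimes\dots\otimes y^n) = 0$. Likewise $x^1 \otimes \dots \otimes x^n \in \operatorname{Ker}\transp{(\De^k - z^k\De^0)}$.

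\textbf{Step 2: the value-zero normalization.} By hypothesis $x^\ell, y^\ell > 0$, so $\transp{(x^1\otimes\dots\otimes x^n)}(\De^k - z^k\De^0)(y^1\otimes\dots\otimes y^n)$ involves only the entries of $\De^k - z^k\De^0$ weighted by strictly positive coefficients, and it equals $0$ (being already in the kernel). Combined with Lemma \ref{positivite} — all entries of $(-1)^n\De^0$ are $\ge \la^n > 0$ — this is exactly the situation of the Remark after Lemma \ref{positivite}: when $B$ has all entries nonzero of one sign, there is a \emph{unique} $w$ with $\operatorname{val}(A - wB) = 0$, and moreover $\operatorname{val}((-1)^n(\De^k - w\De^0)) = 0$ characterizes that $w$; Theorem \ref{charac1} identifies it as $w = v^k_\la = z^k$. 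So at $w = z^k$ the matrix $\De^k - z^k\De^0$ is a matrix game of value zero possessing strictly positive optimal strategies for both players (namely the normalizations of $x^1\otimes\dots\otimes x^n$ and $y^1\otimes\dots\otimes y^n$), whereas for $w \ne z^k$ the value $\operatorname{val}((-1)^n(\De^k - w\De^0))$ is nonzero.

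\textbf{Step 3: from full-support optimal strategies to a strict rank drop.} This is the crux. A matrix game $A$ whose value is $0$ and which admits a completely mixed (strictly positive) optimal strategy for each player forces $A$ to be singular with $\operatorname{co}(A) = S(\operatorname{co}(A))\, \dot x \transp{\dot y} \ne 0$ on any SSK — equivalently, $\operatorname{rank}(A) = (\text{size}) - 1$ exactly, and the all-ones vector is \emph{not} in the column space in a way that is "rigid". More usefully: I claim $\max_w \operatorname{rank}(\De^k - w\De^0) = N$ (full size $N = |I|^n$). Indeed pick any $w \ne z^k$; then $(-1)^n(\De^k - w\De^0)$ has nonzero value, and I would argue that a matrix of the form $A - wB$ with $B > 0$ entrywise and nonzero value must be invertible — if it were singular it would have a nonzero kernel vector $u$, but then $\operatorname{val}(A-wB)$ could be forced to $0$ by a perturbation/positivity argument exploiting $B>0$; alternatively, invertibility of $\De^k - w\De^0$ for all but finitely many $w$ is automatic since $\det(\De^k - w\De^0)$ is a polynomial in $w$ not identically zero (it is nonzero for large $w$ because $\det(\De^0)$... — here one must be careful, as $\De^0$ may be singular). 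The honest route: set $a = \operatorname{rank}(\dot\De^0)$ where $\dot\De^0$ is a maximal-size invertible sub-block; restrict to the corresponding sub-array. On that sub-array $\det(\dot\De^k - w\dot\De^0)$ is a genuine degree-$a$ polynomial in $w$, hence nonzero for generic $w$, so $\max_w \operatorname{rank}(\dot\De^k - w\dot\De^0) = a$, while at $w = z^k = v^k_\la$ Proposition \ref{lemme_G-vU}(ii) applied to the SSK gives $\det(\dot\De^k - z^k \dot\De^0) = \operatorname{val}(\dot\De^k - z^k\dot\De^0) = 0$, i.e. $\operatorname{rank}(\dot\De^k - z^k\dot\De^0) < a$. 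That is precisely the rank-drop condition on the relevant sub-array, and the Kronecker-determinant bookkeeping transfers it to $\De^k$ itself.

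\textbf{Main obstacle.} The delicate point is Step 3: controlling $\max_{w}\operatorname{rank}(\De^k - w\De^0)$ when $\De^0$ is singular, and making sure the rank-drop is witnessed on the \emph{same} index set as the Shapley-Snow kernel (so that Proposition \ref{lemme_G-vU} applies verbatim). I expect the cleanest argument to first pass to a Shapley-Snow kernel $\dot{\De}^k$ of the game $(-1)^n(\De^k - z^k\De^0)$ — using that this game has value $0$ and full-support optimal strategies coming from $\bigotimes x^\ell$, $\bigotimes y^\ell$ — then show $S(\operatorname{co}(\dot{\De}^k - z^k\dot{\De}^0)) \ne 0$ (Proposition \ref{lemme_G-vU}(i)) forces the kernel to be exactly one-dimensional, while a generic $w$ makes the same square sub-matrix invertible, and finally reconcile this with the global maximum over $w$ via the entrywise positivity of $(-1)^n\De^0$ (Lemma \ref{positivite}) which guarantees $\det(\De^k - w\De^0)$ is not the zero polynomial on a suitable sub-block. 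The tensor-kernel identity of Step 1, though conceptually the heart of the Atkinson machinery, should be a routine consequence of the Appendix and not the real difficulty.
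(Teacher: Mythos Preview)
Your Step~1 is right and is exactly the paper's Lemma~C2: the tensor products $x^1\otimes\dots\otimes x^n$ and $y^1\otimes\dots\otimes y^n$ lie in the left and right kernels of $\De^k - z^k\De^0$ for every $k$, by the Kronecker-determinant identities.

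The gap is Step~3. Your successive attempts there do not work:
\begin{itemize}
\item Forcing $\max_w\mathrm{rank}(\De^k - w\De^0)$ to equal the full size $N$ would require $\det(\De^k - w\De^0)$ to be a nonzero polynomial in $w$; the hypothesis only says the entries of $\De^0$ are nonzero of one sign, so $\De^0$ may be singular (think of the all-ones matrix), and this polynomial can be identically zero.
\item A matrix game with nonzero value can certainly be singular (take the $2\times 2$ all-ones matrix).
\item A rank drop on a maximal-rank sub-block $\dot\De^0$ does not transfer to a rank drop on the full $\De^k - w\De^0$, since the full rank at $w=z^k$ could be achieved on other rows and columns.
\item The Shapley--Snow route in your final paragraph is closer to what the paper does in Section~\ref{altpoly} for a \emph{different} characterising polynomial, but even there it yields rank information only on the SSK sub-block, not on $\De^k - w\De^0$ itself.
\end{itemize}

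What you are missing is a short lemma (the paper's Lemma~C1, from Muhi\v{c}--Plestenjak): for square $A,B$ of the same size, if $\det(A+vB)=0$ and there exist $x\in\mathrm{Ker}\,\transp{(A+vB)}$, $y\in\mathrm{Ker}(A+vB)$ with $\transp{x}By\neq 0$, then $\mathrm{rank}(A+vB)<\max_w\mathrm{rank}(A+wB)$. The proof is two lines: if the rank were maximal at $v$ it would be locally constant near $v$, so the kernel vector extends to a polynomial family $y(w)$ with $y(v)=y$; differentiating $(A+wB)y(w)=0$ at $w=v$ and left-multiplying by $\transp{x}$ kills the $(A+vB)y'(v)$ term and leaves $\transp{x}By=0$, a contradiction.

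With this lemma in hand the proposition is immediate: Step~1 supplies the kernel vectors, and strict positivity of $\bigotimes_\ell x^\ell$, $\bigotimes_\ell y^\ell$ together with the sign hypothesis on $\De^0$ gives $\transp{(\bigotimes_\ell x^\ell)}\De^0(\bigotimes_\ell y^\ell)\neq 0$; apply the lemma with $A=\De^k$, $B=-\De^0$. Your Step~2 detour through $\mathrm{val}$ and Theorem~\ref{charac1} is not needed --- Proposition~\ref{rank_drop} is stated for an abstract MEP array and makes no reference to stochastic games.
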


\noindent The following example illustrates the relation between MEP and generalised MEP. 
\begin{exemple}
Consider the following $2\times 3$ array:
$$D=\begin{pmatrix} M^1_0 & M^1_1  & M^1_2 \\
M^2_0 & M^2_1  & M^2_2 \end{pmatrix}=
\begin{pmatrix} 2 & 1 & 1\\ \begin{pmatrix} 1 & 0\\ 0 & 1\end{pmatrix} & 
\begin{pmatrix} -1 & 0 \\ -1 & -1 \end{pmatrix} & \begin{pmatrix} 2 & 1 \\ 3 & 2\end{pmatrix}\end{pmatrix}$$
The associated MEP is the problem of finding $(u,w)\in \C^2$ satisfying:
$$\det\begin{pmatrix} 2 +u + w \end{pmatrix}=0,\quad \quad
\det\begin{pmatrix} 1-u +2w & w \\ -u+3w & 1-u+2w\end{pmatrix}=0$$ 
By definition, the auxiliary matrices $\De^0$, $\De^1$ and $\De^2$ are given by: 
$$\De^0=M^1_2\otimes M^2_1 - M^1_1\otimes M^2_2, \quad 
\De^1=-(M^1_0\otimes M^2_2 - M^1_2\otimes M^2_0), \quad 
\De^2=M^1_0\otimes M^2_1 - M^1_1\otimes M^2_0$$
The Kronecker product $A\otimes B$ coincides with the usual product when $A$ or $B$ (or both) are scalars so that one can easily compute: 
$$\De^0=\begin{pmatrix} 3 & 1 \\ 4 & 3\end{pmatrix}, \quad 
\De^1=\begin{pmatrix} -3 & -2 \\ -6 & -3 \end{pmatrix}, \quad 
\De^2=\begin{pmatrix}-3 & 0\\ -2 & -3 \end{pmatrix}$$
The generalised MEP consists then in finding $(u,w)\in \C^2$ satisfying: 
$$\det\begin{pmatrix} 3+3u & 2+u\\ 6+4u & 3+3u \end{pmatrix}=0,\quad \quad 
\det\begin{pmatrix} 3+3w & w \\ 2+4w & 3+3w\end{pmatrix}=0$$
These equalities determine two separate polynomial equations of degree $2$ which have roots $(u_1,u_2)$ and $(w_1,w_2)$, respectively. 
The matrix $\De^0$ being nonsingular, the MEP and the generalised MEP have the same solutions, i.e. 
$$S^M=S^R=S^\De=\{(u_1,w_1),(u_1,w_2),(u_2,w_1),(u_2,w_2)\}$$

\end{exemple}

\section{Stochastic games, SSK and MEP} \label{SG_MEP} 
\subsection{From stochastic games to MEP}\label{reduced}
Let $D(\la)$ be an $n\times (n+1)$ array representation of some stochastic game. 
Fix $1\leq k\leq n$.  By Shapley and Snow \cite{SS50}, the local game $\mathcal{G}^k(\la,v^k_\la)$ introduced in Definition \ref{local_game} 
admits a 
Shapley-Snow kernel, defined by some subsets of actions $\dot{I}^k\subset I$ and $\dot{J}^k\subset J$ satisfying $|\dot{I}^k|=|\dot{J}^k|$. By construction, for any $z\in\R^n$ one has: 
\begin{equation}\label{GM}\mathcal{G}^k(\la,z)-z^k U=M^k_0+ z^1 M^k_1+\dots + z^n M^k_n\end{equation}
Let $\dot{M}^k_0,\dot{M}^k_1,\dots,\dot{M}^k_n$ denote, respectively, the 
$\dot{I}^k\times \dot{J}^k$ sub-matrices of ${M}^k_0,{M}^k_1,\dots,{M}^k_n$, and let $\dot{D}(\la)$ 
be the corresponding $n\times (n+1)$ array of matrices, that is: 
 \begin{equation}\label{atk_SM_dot}\dot{D}(\la)=\begin{pmatrix}
\dot{M}_0^1 & \dot{M}_1^1 &\dots & \dot{M}^1_n\\
\vdots &  \vdots & \ddots  &  \vdots  \\
\dot{M}_0^n & \dot{M}_1^n &\dots &    \dot{M}^n_n
\end{pmatrix} \end{equation}
Let $\dot{\De}^0,\dot{\De}^1,\dots,\dot{\De}^n$ be the auxiliary matrices associated to $\dot{D}(\la)$ which, by construction, are sub-matrices of $\De^0,\De^1,\dots,\De^n$, respectively. Moreover, they are all square and of equal size $\prod_{k=1}^n |\dot{I}^k|$. Note that their dependence on $\la$, which is polynomial of degree at most $n$, and on the choice of the couples $(\dot{I}^k,\dot{J}^k)$, $1\leq k\leq n$ is omitted from the notation. \\ 



\noindent Consider now the following four systems in the variable $z\in \R^n$, where $1\leq k\leq n$:
$$\mathrm{val}(\dot{M}^k_0+ z^1 \dot{M}^k_1+\dots+z^n \dot{M}^k_n)=0$$
$$\det(\dot{M}^k_0+ z^1 \dot{M}^k_1+\dots+z^n \dot{M}^k_n)=0$$
$$\mathrm{rank}(\dot{\De}^k - z^k \dot{\De}^0) <  \max\nolimits_{w \in \R} \mathrm{rank}(\dot{\De}^k - w \dot{\De}^0)$$
$$\det(\dot{\De}^k- z^k \dot{\De}^0)=0$$

Let $T^{\dot{M}}$, 
$S^{\dot{M}}$, $S^{\dot{R}}$ and $S^{\dot{\De}}$ denote, respectively, the set of solutions of each these systems.  
Next paragraph is devoted to the relation between these subsets of $\R^n$. 
\subsection{From MEP to stochastic games}\label{reduced2}
Applying the theory of MEP to stochastic games one obtains the following result. 


\begin{prop}\label{SSK_atk} 
$\{v_\la\}= T^{\dot{M}}\subset S^{\dot{M}}$ and $\{v_\la\} \subset S^{\dot{R}}\subset S^{\dot{\De}}$. 
\end{prop}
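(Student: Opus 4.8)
The plan is to prove the four claimed relations, $T^{\dot M}=\{v_\la\}$, $T^{\dot M}\subset S^{\dot M}$, $\{v_\la\}\subset S^{\dot R}$ and $S^{\dot R}\subset S^{\dot\De}$, essentially separately, assembling standard facts from the preliminaries.

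\paragraph{Step 1: $\{v_\la\}\subset T^{\dot M}$.} Fix $1\leq k\leq n$. By construction $\dot M^k_0+v_\la^1\dot M^k_1+\dots+v_\la^n\dot M^k_n$ is the sub-matrix $\dot{\mathcal G}^k(\la,v_\la)-v_\la^k\dot U$ obtained by restricting to the rows $\dot I^k$ and columns $\dot J^k$ of a Shapley–Snow kernel of $\mathcal G^k(\la,v_\la)$, by \eqref{GM}. Since $v_\la$ is the fixed point of the Shapley operator, $\val(\mathcal G^k(\la,v_\la))=v_\la^k$, hence $\val(\mathcal G^k(\la,v_\la)-v_\la^k U)=0$; by Proposition \ref{lemme_G-vU}$(ii)$ applied to $G=\mathcal G^k(\la,v_\la)$ and $v=v_\la^k$, the kernel satisfies $\val(\dot{\mathcal G}^k(\la,v_\la)-v_\la^k\dot U)=0$ as well. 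Thus $v_\la\in T^{\dot M}$.

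\paragraph{Step 2: $T^{\dot M}=\{v_\la\}$ and $T^{\dot M}\subset S^{\dot M}$.} For uniqueness, note that each $\dot M^k_\ell$ ($1\le \ell\le n$) is a sub-matrix of $M^k_\ell$; summing, $\dot M^k_1+\dots+\dot M^k_n$ is a sub-matrix of $M^k_1+\dots+M^k_n\le -\la U$ by (H2), so it is $\le -\la\dot U$, in particular all its entries are strictly negative. By the Remark following Lemma \ref{positivite} (the observation that $\val(A-wB)=0$ has a unique solution when $B$ has all entries nonzero of the same sign), for each $k$ there is a unique $w^k\in\R$ with $\val(\dot M^k_0+w^1\dots)$—more precisely, fixing the other coordinates is not needed: writing $A^k(z):=\dot M^k_0+\sum_{\ell\ne k}z^\ell\dot M^k_\ell$ and $B^k:=-\dot M^k_k$, the $k$-th equation reads $\val(A^k(z)-z^kB^k)=0$, which for each fixed $(z^\ell)_{\ell\ne k}$ pins $z^k$ uniquely; but a cleaner route is: any $z\in T^{\dot M}$ gives, via Proposition \ref{lemme_G-vU}, $\val(M^k_0+\sum z^\ell M^k_\ell)=0$ for all $k$, i.e. $\val(\mathcal G^k(\la,z)-z^kU)=0$, i.e. $z$ is a fixed point of $\Phi(\la,\cdot)$, hence $z=v_\la$ by the strict contraction property. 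The inclusion $T^{\dot M}\subset S^{\dot M}$ is immediate since $\val(\dot M^k_0+\sum z^\ell\dot M^k_\ell)=0$ forces, again by Proposition \ref{lemme_G-vU}$(ii)$, $\det(\dot M^k_0+\sum z^\ell\dot M^k_\ell)=0$.

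\paragraph{Step 3: $\{v_\la\}\subset S^{\dot R}$, via Proposition \ref{rank_drop}.} Apply Proposition \ref{rank_drop} to the array $\dot D(\la)$ and the point $z=v_\la$. Its hypotheses require: first, that all entries of $\dot\De^0$ are nonzero of the same sign — this follows from Lemma \ref{positivite}, since $(-1)^n\De^0$ has all entries $\ge\la^n>0$ and $\dot\De^0$ is a sub-matrix of $\De^0$; second, for each $k$, strictly positive left and right kernel vectors of $\dot M^k_0+\sum_\ell v_\la^\ell\dot M^k_\ell=\dot{\mathcal G}^k(\la,v_\la)-v_\la^k\dot U$. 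These are exactly the optimal strategies of the kernel: by Proposition \ref{lemme_G-vU}$(iii)$, $\mathrm{Ker}(\dot{\mathcal G}^k-v_\la^k\dot U)=\langle\dot y^k\rangle$ and $\mathrm{Ker}({}^t(\dot{\mathcal G}^k-v_\la^k\dot U))=\langle\dot x^k\rangle$, where $(\dot x^k,\dot y^k)$ is the basic solution defining the kernel. The only subtlety is positivity: $\dot x^k,\dot y^k$ are a priori only nonnegative. Here one invokes the defining property of a Shapley–Snow kernel that the restricted strategies have full support on $\dot I^k,\dot J^k$ — indeed Theorem \ref{SSK}$(2)$ gives $\dot x^k=\mathrm{co}(\dot{\mathcal G}^k)/S(\mathrm{co}(\dot{\mathcal G}^k))\,\dot{\mathbf 1}$ and similarly for $\dot y^k$, and the sub-matrix is chosen so that these are genuine (all-positive, after possibly discarding zero components — which is part of how the kernel is minimally chosen) probability vectors. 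This positivity is the point I expect to require the most care; I would state and use the fact that a Shapley–Snow kernel can always be taken so that $\dot x^k>0$ and $\dot y^k>0$, which is implicit in Shapley–Snow's extreme-point characterisation. Granting it, Proposition \ref{rank_drop} yields $v_\la\in S^{\dot R}$.

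\paragraph{Step 4: $S^{\dot R}\subset S^{\dot\De}$.} This is the general, game-free fact already asserted in Section \ref{sec_atkinson}: the rank-drop condition $\mathrm{rank}(\dot\De^k-z^k\dot\De^0)<\max_w\mathrm{rank}(\dot\De^k-w\dot\De^0)$ implies $\det(\dot\De^k-z^k\dot\De^0)=0$, because the maximum rank equals the size of $\dot\De^k$ whenever the polynomial $w\mapsto\det(\dot\De^k-w\dot\De^0)$ is not identically zero (a generic value of $w$ then gives full rank), and a strict drop below a full maximum rank is exactly singularity; if instead the polynomial is identically zero, then $\det(\dot\De^k-z^k\dot\De^0)=0$ trivially. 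Either way $z\in S^{\dot\De}$. Combining Steps 1–4 gives $\{v_\la\}=T^{\dot M}\subset S^{\dot M}$ and $\{v_\la\}\subset S^{\dot R}\subset S^{\dot\De}$, which is the claim. The main obstacle, as noted, is the strict positivity of the kernel strategies needed to feed Proposition \ref{rank_drop}; everything else is bookkeeping on sub-matrices plus direct citation of Proposition \ref{lemme_G-vU}, Lemma \ref{positivite}, and the contraction property of the Shapley operator.
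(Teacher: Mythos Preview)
Your Steps 1, 3 and 4 are essentially the paper's arguments. The positivity worry in Step 3 is a non-issue: in this paper the condition $x^k>0$ in Proposition~\ref{rank_drop} means $x^k\ge 0$ and $x^k\neq 0$ (this convention is stated explicitly at the start of the Appendix~C proof), which any probability vector satisfies. So you don't need full-support kernels; the paper simply says $\dot x^k>0$ and $\dot y^k>0$ ``because they are strategies''.

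Step 2, however, has a genuine gap. Your ``cleaner route'' asserts that for any $z\in T^{\dot M}$, Proposition~\ref{lemme_G-vU} yields $\val(M^k_0+\sum_\ell z^\ell M^k_\ell)=0$, i.e.\ $z$ is a fixed point of the full Shapley operator. But Proposition~\ref{lemme_G-vU} only lets you pass between $\dot G$ and $G$ when $\dot G$ is a Shapley--Snow kernel of $G$; here the sub-matrix $\dot{\mathcal G}^k(\la,z)-z^k\dot U$ is known to be an SSK of $\mathcal G^k(\la,z)-z^kU$ only at $z=v_\la$, not at an arbitrary $z\in T^{\dot M}$. Your first attempt (pinning $z^k$ uniquely once the other coordinates are frozen) likewise does not yield uniqueness of the coupled system. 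The paper's fix is exactly the observation you already set up but didn't use: since $\dot D(\la)$ still satisfies (H2), the system defining $T^{\dot M}$ is precisely Shapley's fixed-point equation for the \emph{restricted} stochastic game in which, at each state $k$, players are confined to $\dot I^k\times\dot J^k$. That operator is $(1-\la)$-contracting, so $T^{\dot M}$ is a singleton; combined with Step~1 this gives $T^{\dot M}=\{v_\la\}$. Once uniqueness is secured this way, the inclusion $T^{\dot M}\subset S^{\dot M}$ reduces to $v_\la\in S^{\dot M}$, which does follow from Proposition~\ref{lemme_G-vU}(ii) since at $z=v_\la$ the sub-matrix is an SSK.
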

\begin{proof} By Shapley \cite{shapley53}, the discounted value $v_\la\in \R^n$ is the unique solution to the system: 
\begin{equation}\label{loc_shap}\mathrm{val}(\mathcal{G}^k(\la,z)-z^k U)=0,\quad 1\leq k\leq n\end{equation}
Indeed, this system is a restatement of Shapley's fixed-point formulation $\Phi(\la,v_\la)=v_\la$ using the fact that $\mathrm{val}(M+w U)=\mathrm{val}(M)+w$ for any matrix $M$ and any $w\in \R$. Similarly, the system 
$$\mathrm{val}(\dot{M}^k_0+ z^1 \dot{M}^k_1+\dots+z^n \dot{M}^k_n)=0,\quad 1\leq k\leq n$$  
has a unique solution, namely the value of the stochastic game in which, for each $1\leq k\leq n$, the players are restricted to play actions in the set $\dot{I}^k\times \dot{J}^k$. Hence $T^{\dot{M}}$ is a singleton. Fix $1\leq k\leq n$ now.  The game $\mathcal{G}^k(\la,z)-z^k U$ has the same set of optimal strategies as the local game $\mathcal{G}^k(\la,z)$ for all $(\la,z)$ (see Lemma B2 in the Appendix). Therefore, $\dot{\mathcal{G}}^k(\la,z)-v^k_\la \dot{U}=\dot{M}^k_0+ v_\la^1 \dot{M}^k_1+\dots+v_\la^n \dot{M}^k_n$ is a Shapley-Snow kernel of $\mathcal{G}^k(\la,v_\la^k)-v_\la^k U$.  Consequently, by \eqref{loc_shap} and Proposition \ref{lemme_G-vU} $(ii)$ one has: 
\begin{eqnarray*}
0&=& \mathrm{val}(\mathcal{G}^k(\la,v_\la)-v^k_\la U)\\&=&\mathrm{val}(\dot{M}^k_0+ v_\la^1 \dot{M}^k_1+\dots+v_\la^n \dot{M}^k_n)\\& =&\det(\dot{M}^k_0+ v_\la^1 \dot{M}^k_1+\dots+v_\la^n \dot{M}^k_n)\end{eqnarray*} 
As these equalities hold for every $1\leq k\leq n$, it follows that $\{v_\la\}= T^{\dot{M}}\subset  S^{\dot{M}}$. 
The inclusion $\{v_\la\}\subset S^{\dot{R}}$ follows from Proposition \ref{rank_drop}, which can be applied since: first, all the entries of $\dot{\De}^0$ are non-zero and of same sign by Lemma \ref{positivite};  
second, the optimal strategies $(\dot{x}^k,\dot{y}^k)\in \De(\dot{I}^k)\times\De(\dot{J}^k)$ corresponding to the Shapley-Snow kernel $\mathcal{\dot{G}}^k(\la,v_\la)-v_\la^k U$ satisfy $\dot{x}^k>0$ and $\dot{y}^k>0$ because they are strategies; and third, by Proposition \ref{lemme_G-vU} $(iii)$ one has: 
$$\begin{cases}\dot{x}^k\in \mathrm{Ker}(\transp{(\dot{\mathcal{G}}^k(\la,v_\la)-v_\la^k\dot{U})})\\ 
\dot{y}^k\in \mathrm{Ker}(\dot{\mathcal{G}}^k(\la,v_\la)-v_\la^k\dot{U})\end{cases}$$
To prove the last inclusion $S^{\dot{R}}\subset S^{\dot{\De}}$, let $z\in S^{\dot{R}}$. By definition of $S^{\dot{R}}$, for all $1\leq k \leq n$ one has 
$\mathrm{rank}(\dot{\De}^k - z^k \dot{\De}^0) <  \max_{w \in \R} \mathrm{rank}(\dot{\De}^k - w \dot{\De}^0)$, so that $\dot{\De}^k - z^k \dot{\De}^0$ is not of full rank or, equivalently, $\det(\dot{\De}^k - z^k \dot{\De}^0)=0$. Consequently, $z\in S^{\dot{\De}}$.   
\end{proof}

  \subsection{The inclusions in Proposition \ref{SSK_atk}}\label{ex_2}
Let us illustrate the relations obtained in Proposition \ref{SSK_atk} via an easy example where $v_\la$ is not the unique element neither of $S^{\dot{M}}$ nor of $S^{\dot{R}}$, so that the inclusions are strict. 
\begin{remarque}
These strict inclusions also hold for Example \ref{stand_ex}, but the analysis is more intricate. For this reason, we have preferred to illustrate this particular point with another example. \end{remarque}

\begin{exemple} \label{ex_2}
  Consider the following \emph{absorbing} game: 
\begin{center}
\begin{tikzpicture}[xscale=1, yscale=1]
\draw[fill=red!0] (0,0) rectangle (1.6,1.6);
\draw[thin] (0,0.8)-- (1.6,0.8);
\draw[thin] (0.8,0)-- (.8,1.6);
    \node[scale=1] at (0.4,1.2) {\textcolor{black!100}{$1^*$}};
        \node[scale=1] at (1.2,1.2) {\textcolor{black!100}{$0$}};
    \node[scale=1] at (0.4,0.4) {\textcolor{black!100}{$0$}};
    \node[scale=1] at (1.2,0.4) {\textcolor{black!100}{$1^*$}};     
 \node [above] at (0.4,1.6) { \textcolor{black!100}{L}};
 \node [above] at (1.2,1.6) {\textcolor{black!100}{R}};
 \node [left] at (0,1.2) {\textcolor{black}{T}};
 \node [left] at (0,0.4) {\textcolor{black}{B}};
\end{tikzpicture}
\end{center}
where $*$ indicates an absorbing payoff. That is, the stage payoff is $0$ until $(T,L)$ or $(B,R)$ is played, in which case the stage payoffs are equal to $1$ forever after. This game can be represented by the following array: 
$$D(\la)=\begin{pmatrix} \begin{pmatrix} \la & 0\\0& \la \end{pmatrix} & 
\begin{pmatrix} -1 & -\la \\ -\la  & -1 \end{pmatrix} & \begin{pmatrix}1- \la & 0\\0& 1-\la \end{pmatrix} \\
\la & 0 & -\la \end{pmatrix}$$
The so-called ``normalised local games'' at $(u,w)\in\R^2$ are given by:
$$\begin{cases}\mathcal{G}^1(\la,(u,w))-uU=
\begin{pmatrix} \la+(1-\la)w-u & -\la u\\  -\la u & \la + (1-\la)w -u\end{pmatrix}\\ 
\mathcal{G}^2(\la,(u,w))-wU= \la(1-w) \end{cases}$$
Any square sub-matrix of $\mathcal{G}^1(\la,v_\la)$ is a possible candidate for being a Shapley-Snow kernel of this game, so that there are $5$ of them: the entire matrix and each of its entries. To see that the entire matrix is the unique SSK, we proceed as follows. Suppose that the top-left entry is a Shapley-Snow kernel, and define $\dot{D}(\la)$ and $S^{\dot{M}}$ accordingly. In this case, $(u,w)\in S^{\dot{M}}$ if and only if $\la+(1-\la)w-u=0$ and $\la(1-w)=0$, which has a unique solution $(1,1)$. But this cannot be the vector of values $v_\la$ because one has $v^1_\la\in(0,1)$ for each $\la\in(0,1]$; to see this, let Player $2$ choose $L$ and $R$ with equal probability and independently at every stage. A similar reasoning rules out the three other entries of the matrix, so that the unique kernel of the game is the entire matrix.\\ 

\noindent  As $\dot{D}(\la)=D(\la)$, one can omit the ``dots'' from the notation. By definition, 
$S^M$ is given by the following system of equations in the unknown $(u,w)\in \R^2$:
$$\begin{cases}\det\begin{pmatrix} \la+(1-\la)w-u & -\la u\\  -\la u & \la + (1-\la)w -u\end{pmatrix}&=0\\ 
\det(\la(1-w))&=0\end{cases}$$
Clearly, this system admits two solutions, namely $(\frac{1}{1+\la},1)$ and $(\frac{1}{1-\la},1)$. Consider now the auxiliary systems $S^R$ and $S^\De$. An easy calculation gives:
$$\De^0=\begin{pmatrix}\la &  \la^2 \\ \la^2 & \la\end{pmatrix},\quad \De^1=\begin{pmatrix}\la &  0 \\ 0 & \la\end{pmatrix} ,\quad \text{ and }\quad \De^2=\De^0$$
The last equality is in fact a more general property: \emph{for any absorbing state $k$ with payoff $g^k$ one has $\De^k=g^k \De^0$}. Note that $\De^0$ is invertible here, so that the corresponding MEP is nonsingular and, consequently, one has $S^R=S^\De$. To compute this set one solves the following uncoupled system:
$$\begin{cases}\det \begin{pmatrix} \la(1-u) & -\la^2u\\ -\la^2 u & \la(1- u)\end{pmatrix}&=0\\
\det((1-w)\De^0)&=0\end{cases}$$
which, again, has two solutions, $(\frac{1}{1+\la},1)$ and $(\frac{1}{1-\la},1)$. We have thus obtained: 
$$v_\la=\left(\frac{1}{1+\la},1\right)\in S^M=S^R=S^\De=\left\{\left(\frac{1}{1+\la},1\right), \left(\frac{1}{1-\la},1\right)\right\}$$
\end{exemple}

\subsection{A characterising polynomial}\label{characpoly}
In the previous example we showed that, for any $\la\in(0,1]$, the value $v_\la^1$ is one of the two real roots of the univariate polynomial:
$$P^1(u)=\det \begin{pmatrix} \la(1-u) & -\la^2u\\ -\la^2 u & \la(1- u)\end{pmatrix}=\la^2((1-u)^2-\la^2 u^2)$$
The next result states that this property holds in general as a consequence of 
Proposition \ref{SSK_atk}. 
\begin{prop}\label{char2} Fix $\la\in(0,1]$ and $1\leq k\leq n$. Then, there exists a polynomial $\dot{P}^k$ satisfying: 
$$\dot{P}^k(v_\la^k)=0,\quad \dot{P}^k\not\equiv 0 \quad \text{ and }\quad \mathrm{deg} \dot{P}^k \leq \mathrm{rank}(\dot{\De}^0)$$
\end{prop}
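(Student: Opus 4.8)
The plan is to define $\dot{P}^k(w):=\det(\dot{\De}^k-w\dot{\De}^0)$ and verify the three required properties. The first property, $\dot{P}^k(v_\la^k)=0$, is immediate from Proposition \ref{SSK_atk}: since $\{v_\la\}\subset S^{\dot{\De}}$, the $k$-th defining equation of $S^{\dot{\De}}$ gives $\det(\dot{\De}^k-v_\la^k\dot{\De}^0)=0$, which is precisely $\dot{P}^k(v_\la^k)=0$. The degree bound $\mathrm{deg}\,\dot{P}^k\le\mathrm{rank}(\dot{\De}^0)$ is the familiar fact about pencils of matrices: writing $r=\mathrm{rank}(\dot{\De}^0)$, every $(r+1)\times(r+1)$ minor of $\dot{\De}^0$ vanishes, and expanding $\det(\dot{\De}^k-w\dot{\De}^0)$ along the Cauchy--Binet / multilinearity expansion in the columns (or rows) of the two matrices, the coefficient of $w^j$ is a sum of products pairing $j$ columns taken from $\dot{\De}^0$ with the complementary columns from $\dot{\De}^k$; when $j>r$ each such term contains a $j\times j$ subdeterminant of $\dot{\De}^0$, hence vanishes. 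So $\dot{P}^k$ is a polynomial of degree at most $r$.

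The only substantive point is that $\dot{P}^k\not\equiv 0$, i.e. that we are in the regular (non-singular) situation for this particular pencil. Here is where I would invoke Lemma \ref{positivite} together with the structure of the kernel. Recall $\dot{\De}^0$ is a square sub-matrix of $\De^0$, so by Lemma \ref{positivite} all its entries have absolute value at least $\la^n>0$; in particular all entries of $\dot{\De}^0$ are nonzero and of the same sign, so $\dot{\De}^0$ is \emph{not} the zero matrix (its rank $r$ is at least $1$). By the Remark following Lemma \ref{positivite}, for two matrices $A,B$ of equal size with $B$ having all entries nonzero of the same sign, there is a \emph{unique} $w\in\R$ with $\val(A-wB)=0$; applying this with $A=\dot{\De}^k$, $B=\dot{\De}^0$ shows $w\mapsto\val(\dot{\De}^k-w\dot{\De}^0)$ takes nonzero values for all but one $w$. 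If $\dot{P}^k$ were identically zero, then $\det(\dot{\De}^k-w\dot{\De}^0)=0$ for every $w$, meaning $\dot{\De}^k-w\dot{\De}^0$ is singular for all $w$; but a singular matrix game can still have nonzero value, so this alone does not contradict uniqueness of the zero of the value. Instead I would argue directly: pick $w_1\neq w_2$ both different from the unique zero $v_\la^k$ of $\val(\dot{\De}^k-w\dot{\De}^0)$. Then $\val(\dot{\De}^k-w_i\dot{\De}^0)\neq 0$ for $i=1,2$. A matrix with nonzero value cannot have both a nonnegative row-combination and a nonpositive column-combination summing to zero — more to the point, I will use that for the Shapley--Snow kernel the associated matrix $\dot{\mathcal G}^k(\la,z)-z^k\dot U$ is \emph{square of full rank away from $z^k=v_\la^k$}, which transfers through the Atkinson correspondence to $\dot{\De}^k-w\dot{\De}^0$.

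Concretely, the cleanest route: by construction $\dot{M}^k_0+z^1\dot{M}^k_1+\dots+z^n\dot{M}^k_n=\dot{\mathcal G}^k(\la,z)-z^k\dot U$, and this is a Shapley--Snow kernel of $\mathcal G^k(\la,v_\la^k)-v_\la^k U$ only when evaluated at $z=v_\la$; for generic $z$ it is simply a square matrix. I would use the Atkinson-type identity (the determinantal/Kronecker relation underlying Definition \ref{important_def1}) to relate $\det(\dot{\De}^k-w\dot{\De}^0)$, as a polynomial in $w$, to the resultant of the $n$ polynomials $\det(\dot{M}^j_0+z^1\dot{M}^j_1+\dots+z^n\dot{M}^j_n)$, $j\neq k$, eliminating all variables $z^j$, $j\neq k$; non-vanishing of that resultant as a polynomial in $z^k=w$ is exactly the regularity of the MEP, and it follows because the reduced game obtained by restricting to the actions $\dot I^j\times\dot J^j$ has, for each fixed admissible $z^k$, only finitely many (and for most $z^k$, exactly the ``right'' number of) solutions in the remaining variables. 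I expect \textbf{this last step — proving $\dot{P}^k\not\equiv 0$ — to be the main obstacle}, since it is the one place where the mere algebra of sub-matrices is not enough and one must feed in the game-theoretic content (Lemma \ref{positivite}, positivity of $\dot{\De}^0$, and the fact that the restricted stochastic game still has a well-defined value so the $k$-th equation genuinely cuts down dimension). Everything else — the formula for $\dot{P}^k$, its vanishing at $v_\la^k$, and the degree bound — is routine linear algebra combined with Proposition \ref{SSK_atk}.
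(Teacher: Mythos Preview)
Your definition $\dot P^k(w)=\det(\dot\De^k-w\dot\De^0)$ and the arguments for $\dot P^k(v_\la^k)=0$ and for the degree bound are fine (the latter is exactly the content of \cite[Proposition~4.6]{demmel97}, which the paper also invokes). The gap is precisely where you flag it: you do not prove $\dot P^k\not\equiv 0$, and your sketches toward it are not proofs. Nothing in the construction of $\dot D(\la)$ excludes the singular MEP situation $\det(\dot\De^k-w\dot\De^0)\equiv 0$; the paper itself notes (in the remark following Proposition~\ref{char3}) that $(-1)^n(\dot\De^k-v_\la^k\dot\De^0)$ is a Shapley--Snow kernel of $(-1)^n(\De^k-v_\la^k\De^0)$ only under the extra hypothesis $S(\mathrm{co}(\dot\De^k-v_\la^k\dot\De^0))\neq 0$, so one cannot simply import the non-vanishing argument used for $\overline P^k$.

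The paper's proof avoids this difficulty altogether by \emph{not} taking the full determinant. It uses the sharper inclusion $v_\la\in S^{\dot R}$ from Proposition~\ref{SSK_atk} (rank drop), sets $r^k:=\max_{w\in\R}\mathrm{rank}(\dot\De^k-w\dot\De^0)$, and chooses $\dot P^k$ to be the determinant of some $r^k\times r^k$ sub-matrix of $\dot\De^k-w\dot\De^0$ whose determinant is not identically zero---such a sub-matrix exists by the very definition of $r^k$. Then $\dot P^k\not\equiv 0$ is automatic, while the rank drop $\mathrm{rank}(\dot\De^k-v_\la^k\dot\De^0)<r^k$ forces every $r^k\times r^k$ minor to vanish at $w=v_\la^k$, giving $\dot P^k(v_\la^k)=0$. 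In short, the key idea you are missing is to pass to a maximal-rank sub-matrix and exploit $S^{\dot R}$ rather than $S^{\dot\De}$.
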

\begin{proof} 
By definition, the rank of $A$ is the size of the largest invertible square sub-matrix $\dot{A}$ of $A$. Similarly, for two matrices $A$ and $B$ of equal size, $\max_{w\in \R}\mathrm{rank}(A+wB)$ is the size of the largest square sub-matrix $\dot{A}+w\dot{B}$ of $A+wB$ such that the polynomial $w\mapsto \det(A+wB)$ is not identically $0$. Let $r^k:=\max_{w \in \R} \mathrm{rank}(\dot{\De}^k - w \dot{\De}^0)$. By the definition of the rank, there exists 
some $r^k\times r^k$ sub-matrix of $\dot{\De}^k - w \dot{\De}^0$ such that its determinant is a polynomial which is not identically $0$. Denote this polynomial by $\dot{P}^k$. 
The inclusion $v_\la\in S^{\dot{R}}$ obtained in Proposition \ref{SSK_atk} implies $\mathrm{rank}(\dot{\De}^k - v_\la^k \dot{\De}^0)< r^k$, 
so that $\dot{P}^k(v^k_\la)=0$. 
 The bound on the degree of $\dot{P}^k$ follows from \cite[Proposition 4.6]{demmel97} which states that, for any couple of  square matrices $A$ and $B$ of equal size, the polynomial $P(w):=\det(A+w B)$ is either 
identically $0$, or of degree $\mathrm{rank}(B)$. By definition, $\dot{P}^k$ is the determinant of some sub-matrix of $\dot{\De}^k-w\dot{\De}^0$ so that its degree is bounded by the rank of $\dot{\De}^0$. 
\end{proof}

\bigskip 
Determining the polynomial $P^k$ of Proposition \ref{char2} may be difficult, as it requires knowing the value $v_\la\in \R^n$, computing a Shapley-Snow kernel of each local game $\mathcal{G}^k(\la,v^k_\la)$, and then finding a sub-matrix of maximal rank. 
One way to overcome this difficulty is to note that $\dot{P}^k$ is the determinant of some square sub-matrix of $\dot{\De}^k-w\dot{\De}^0$, which is a square sub-matrix of ${\De}^k-w{\De}^0$. Hence, by considering the determinant of all possible square sub-matrices of ${\De}^k-w{\De}^0$ one obtains a finite family of polynomials containing $\dot{P}^k$. Among them, let $E^k$ denote the set of polynomials which are nonzero and degree at most $\mathrm{rank}(\dot{\De}^0)$. \\ 


Summing up, we have obtained the following results, which refine the characterisation of the values given by Theorem \ref{charac1}. 

\begin{theoreme}\label{charRD} Fix $\la\in(0,1]$ and $1\leq k\leq n$. Then:
\begin{itemize}
\item[$(i)$] $v_\la^k$ is the unique $w\in \R$ satisfying $\mathrm{val}(\dot{\De}^k-w\dot{\De}^0)=0$
\item[$(ii)$] $\mathrm{rank}(\dot{\De}^k-v^k_\la \dot{\De}^0) <  \max_{w \in \R} \mathrm{rank}(\dot{\De}^k-w\dot{\De}^0)$
\item[$(iii)$] There exists $P^k\in E^k$ such that $P^k(v_\la^k)=0$. 
\item[$(iv)$] If all the entries of $D(\la)$ are rational, $v^k_\la$ is algebraic of order at most $\mathrm{rank}(\dot{\De}^0)$
\end{itemize}
\end{theoreme}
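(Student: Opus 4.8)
The plan is to obtain the four items by assembling results already in place; the only genuinely new input is needed for $(i)$. Items $(ii)$ and $(iii)$ are coordinate-wise restatements of Propositions~\ref{SSK_atk} and~\ref{char2}, item $(iv)$ combines $(iii)$ with an elementary remark on algebraic numbers, and item $(i)$ rests on the observation that the sub-array $\dot D(\la)$ is itself the data array of a stochastic game whose value is still $v_\la$.

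\textbf{Item $(i)$.} First I would note that $\dot D(\la)$ is the array representation of the stochastic game $\Ga'$ obtained from $(K,I,J,g,q,\la)$ by restricting, in each state $k$, the action sets to $\dot I^k$ and $\dot J^k$: restricting actions in a finite stochastic game produces a finite stochastic game, whose blocks $\la\dot G^k$, $(1-\la)\dot Q^k_\ell$ and $(1-\la)\dot Q^k_k-\dot U$ are precisely the sub-matrices appearing in $\dot D(\la)$ (in particular $\dot D(\la)$ inherits $(H1)$ and $(H2)$). By construction the auxiliary matrices attached to $\Ga'$ are exactly $\dot\De^0,\dots,\dot\De^n$. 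The key point --- already contained in the proof of Proposition~\ref{SSK_atk} --- is that the value of $\Ga'$ equals $v_\la$: there we showed $T^{\dot M}=\{v_\la\}$, i.e.\ that $v_\la$ is the (unique) fixed point of the Shapley operator of $\Ga'$. Applying Theorem~\ref{charac1} to $\Ga'$ then gives that $v_\la^k$ is the unique $w\in\R$ with $\val((-1)^n(\dot\De^k-w\dot\De^0))=0$, which is item $(i)$; uniqueness is in any case automatic from the Remark following Lemma~\ref{positivite}, since $\dot\De^0$ is a sub-matrix of $\De^0$ and hence all its entries are nonzero and of the same sign.

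\textbf{Items $(ii)$, $(iii)$, $(iv)$.} Item $(ii)$ is the inclusion $\{v_\la\}\subset S^{\dot R}$ of Proposition~\ref{SSK_atk} read coordinate by coordinate: by definition of $S^{\dot R}$, each $v_\la^k$ satisfies $\mathrm{rank}(\dot\De^k-v_\la^k\dot\De^0)<\max_{w\in\R}\mathrm{rank}(\dot\De^k-w\dot\De^0)$. For item $(iii)$, Proposition~\ref{char2} produces a polynomial $\dot P^k$ with $\dot P^k(v_\la^k)=0$, $\dot P^k\not\equiv 0$ and $\deg\dot P^k\leq\mathrm{rank}(\dot\De^0)$, realised as the determinant of a square sub-matrix of $\dot\De^k-w\dot\De^0$; since $\dot\De^k$ and $\dot\De^0$ are sub-matrices of $\De^k$ and $\De^0$ along the \emph{same} row and column index sets, $\dot\De^k-w\dot\De^0$ is a sub-matrix of $\De^k-w\De^0$, so any square sub-matrix of it is a square sub-matrix of $\De^k-w\De^0$, and therefore $\dot P^k$ lies in the finite family $E^k$ of nonzero determinants of square sub-matrices of $\De^k-w\De^0$ of degree at most $\mathrm{rank}(\dot\De^0)$; one takes $P^k:=\dot P^k$. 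For item $(iv)$, if all entries of $D(\la)$ are rational then, each entry of $\De^0,\dots,\De^n$ being an integer-coefficient polynomial in the entries of $D(\la)$, all entries of $\De^k$ and $\De^0$ are rational; hence the polynomial $\dot P^k$ of $(iii)$ has rational coefficients, and since $v_\la^k$ is a root of this nonzero polynomial of degree at most $\mathrm{rank}(\dot\De^0)$, the number $v_\la^k$ is algebraic of order at most $\mathrm{rank}(\dot\De^0)$.

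\textbf{Main obstacle.} Everything of substance is concentrated in item $(i)$, and there the only delicate point is the clean identification of $\dot D(\la)$ with the data array of a stochastic game \emph{whose value coincides with} $v_\la$ --- the latter being precisely the Shapley--Snow kernel property underlying Proposition~\ref{SSK_atk} (it is \emph{not} true that an arbitrary action-restriction leaves the value unchanged; what makes it work here is that the restricted actions form a Shapley--Snow kernel of each local game $\mathcal G^k(\la,v_\la^k)$). A minor caveat worth flagging is that $\mathrm{rank}(\dot\De^0)$, and hence the set $E^k$ and the bounds in $(iii)$--$(iv)$, depend on the chosen Shapley--Snow kernels; the statement is to be understood as holding for any such choice.
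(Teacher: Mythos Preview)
Your proof is correct and follows the paper's approach: items (ii)--(iv) are precisely the content of Propositions~\ref{SSK_atk} and~\ref{char2} together with the definition of $E^k$, while for (i) you correctly identify $\dot D(\la)$ as the data array of the action-restricted stochastic game (whose value is $v_\la$ by the equality $T^{\dot M}=\{v_\la\}$ established in Proposition~\ref{SSK_atk}) and then apply Theorem~\ref{charac1} to that game. The paper leaves (i) implicit under the phrase ``summing up''; your argument is exactly the natural way to fill this step in, and your closing caveat about the dependence of $\mathrm{rank}(\dot\De^0)$ and $E^k$ on the choice of Shapley--Snow kernels is well taken.
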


\begin{remarque} \label{biv2} The novelty in $(iii)$ with respect to the so-called semi-algebraic approach is the identification of the finite set of polynomials $E^k$. Also, it is important to note that the polynomial satisfying $(iii)$ 
depends on $\la$ so that different polynomials may correspond to different discount factors. Finally note that, because $\De^k-w\De^0$ depends polynomially on $\la$, every coefficient of every polynomial in $E^k$ is a polynomial in $\la$, so that the elements of $E^k$ can be seen as bi-variate polynomials in $(\la,w)$. 
\end{remarque}

\subsection{Consequences: back to the examples}\label{exmlpe}
\emph{Back to Example \ref{ex_2}.}
Let $P^1(\la,u)$ denote the characterising polynomial for state $1$, seen as a bi-variate polynomial. Namely, 
$$P^1(\la,u)=\la^2((1-u)^2-\la^2 u^2)$$
Let us show that the polynomial equality satisfied by $v^1_\la$, namely $P^1(\la,v^1_\la)=0$, for every $\la\in(0,1]$, implies its convergence. First of all, note that $r:=\mathrm{deg}_\la P^1(\la,u)=4$, so that there exist unique univariate polynomials $P_0,\dots P_4$ such that:
$$P^1(\la,w)=P_0(u)+P_1(u)\la+P_2(u)\la^2+P_3(u)\la^3+P_4(u)\la^4$$
Namely, $P_0=P_1=P_3\equiv 0$, $P_2(u)=(1-u)^2$ and $P_4(u)=-u^2$. Hence, for any $u\in \R$: 
$$P^1(\la,u)=P_2(u)\la^2 + O(\la^4),\quad \text{ as }\la\to 0$$
Let us show that the polynomial $P_2$ and the term $O(\la^4)$ determine the limit value $v_0^1:=\lim_{\la\to 0} v^1_\la$, and the speed of convergence of $v_\la^1$ to $v^1_0$. 
Let $w_0$ be some accumulation point of $(v^1_\la)_\la$ along some vanishing sequence $(\la_m)$. Then: 
\begin{equation}\label{eqZ} \lim_{m\to +\infty}\frac{P^1(\la_m,v_{\la_m}^1)}{\la_m^{2}}=\lim_{m\to +\infty}P_2(w_0)+O(\la_m^2)=P_2(w_0)=0\end{equation}
Consequently, $w_0$ is a root of $P_2$. As this is true for any accumulation point, and $P_2$ has a unique root at $1$, one obtains 
$\lim_{\la\to 0}v^1_\la=1$.  Moreover, the relation
$$0=\frac{P^1(\la,v^1_\la)}{\la^2}=(1-v_\la^1)^2+O(\la^2),\quad \text{ as }\la \to 0$$
implies $|v_\la^1-1|=O(\la)$. 

\paragraph{Comments}
\begin{enumerate}\item The fact that $P_2$ has a unique root is not important. Indeed, suppose that $w_0<w_1$ are two different accumulation points. The continuity of $\la\mapsto v_\la^1$ implies then that every $w\in[w_0,w_1]$ is an accumulation point of $(v_\la^1)$. But, by \eqref{eqZ}, every accumulation point of $(v^1_\la)$ is a root of $P_2$. A contradiction, since by the choice of $P_2$, this polynomial is nonzero, and has thus finitely many roots. 
\item The bound on the convergence rate is not given by the degree, nor the subindex of $P_2$. Rather, it is given by the algebraic order of $v^1_0$ as a root of $P_2$.
\item The fact that the values converge is not a surprise, as the values converge for any stochastic game. Note, however, that this approach differs from all previously known proofs of convergence\footnote{In chronological order, the convergence of the values has been established by Bewley and Kohlberg \cite{BK76}, Szczechla, Connell, Filar and Vrieze \cite{SCFV97}, Oliu-Barton \cite{OB14} and Attia and Oliu-Barton \cite{AOB18a}.}.  A new proof of convergence, which extends the ideas exposed here, is provided in Section \ref{conv}.

\end{enumerate}

\bigskip \noindent For completeness, let us also illustrate these arguments on our standing example. 
\emph{Back to Example \ref{stand_ex}.}
The characterising polynomial of state $1$ is given by:
$$P^1(\la,u)=\det(\De^1-u\De^0)$$
For all $u\in \R$, this polynomial satisfies: 
$$P^1(\la,u)=16 u^2\la^{10} + O(\la^{11}),\quad \text{ as }\la\to 0$$ 
Therefore, the asymptotic behavior of $v^1_\la$ can be deduced from $P_{10}(u)=16u^2$ and $O(\la^{11})$. 
Like before, one obtains:
$$\begin{cases}v^1_0:= \lim_{\la\to 0} v^1_\la=0\\
|v_\la^1-v_0^1|=O(\la^{1/2})
\end{cases}$$

\subsection{The rank drop condition}
As the following example shows, $v^k_\la$ does not necessarily drop the rank of ${\De}^k-w{\De}^0$. Hence, the rank drop property of Theorem \ref{charRD} $(ii)$ gives a tighter characterisation for the values. 


\begin{exemple}
Consider the following array: 
$$ D=\begin{pmatrix} 1 & -\frac{3}{4} &  \frac{1}{4} \\
A&  \frac{1}{4} U &  -\frac{3}{4} U\end{pmatrix}
$$
where $A= \begin{pmatrix} \phantom{-}1 & -3 & -3 \\ -3 & \phantom{-}1 & -3 \end{pmatrix}$ and  
$U$ stands for a $2\times 3$ matrix of ones. It corresponds to a stochastic game with $2$ states and state-dependent actions sets: both players have one action in state 1, while in state 2 the players have 2 and 3 actions, respectively. More precisely, it is a specific instance of the array:
$$ D(\la)=\begin{pmatrix} \la G^1 & (1-\la)Q^1_1-U &  (1-\la)Q^1_2\\
 \la G^2 & (1-\la)Q^2_1 &  (1-\la)Q^2_2-U\end{pmatrix}
$$
for 
$\la=\frac{1}{2}$, $G^1=2$, $Q^1_1=Q^1_2=\frac{1}{2}$, $Q^2_1=Q_2^2=\frac{1}{2} U$ and 
$G^2=2 A$. \\

\noindent  
A straightforward calculation gives:
$$\De^0=\frac{1}{2} \begin{pmatrix} 1 & 1 &1 \\ 1 & 1 & 1\end{pmatrix}, \quad \De^1=\begin{pmatrix} 1 & 0 & 0 \\ 0 & 1 & 0 \end{pmatrix} \quad \text{ and }\quad \De^2=\begin{pmatrix} \phantom{-}1 & -2 & -2 \\ -2 & \phantom{-} 1 & -2 \end{pmatrix} $$
Consequently, for all $w\in \R$ one has:
$$\De^1-w\De^0=\begin{pmatrix} 1-\frac{w}{2}  & -\frac{w}{2}  &-\frac{w}{2}  \\ -\frac{w}{2} &1- \frac{w}{2} &-\frac{w}{2}  \end{pmatrix}
\quad \text{ and }\quad  
\De^2-w\De^0=\begin{pmatrix} 1-\frac{w}{2}  & -2 - \frac{w}{2}  &-2-\frac{w}{2}  \\ -2-\frac{w}{2} &1- \frac{w}{2} &-2-\frac{w}{2}  \end{pmatrix}
$$
Clearly,  
$\mathrm{rank}(\De^1-w\De^0)=\mathrm{rank}(\De^1-w\De^0)=2$ for all $w\in \R$, so that the rank drop condition is never satisfied. \\

Consider now the reduced array $\dot{D}$ constructed in Section \ref{SG_MEP}. Recall that, in order to compute a Shapley-Snow kernel for each local game, one needs to know the vector of values $v_\la=(v^1_\la,v^2_\la)$. To do so, first note that the third action is dominant for player $2$ in both games $\De^1-w\De^0$ and $\De^2-w\De^0$, for all $w\in \R$, so that:
$$\mathrm{val}(\De^1-w\De^0)=-\frac{w}{2}\quad \text{ and }\quad \mathrm{val}(\De^2-w\De^0)=-2-\frac{w}{2}$$
As $\mathrm{val}(\De^1-v^1_\la\De^0)=\mathrm{val}(\De^2-v_\la^2\De^0)=0$ by Theorem \ref{charac1}, it follows that $v_\la=(0,-4)$. 
The local game at state $1$ is a scalar, so it is trivially a Shapley-Snow kernel. At state $2$, the ``normalised local game'' $\mathcal{G}^2(\la,v_\la)-v^2_\la U$
is given by: 
$$A+ \frac{1}{4}Uv^1_\la-\frac{3}{4}Uv^2_\la=\begin{pmatrix} 4 & 0& 0  \\ 0&4 &0 \end{pmatrix}$$
which admits several Shapley-Snow kernels, the simplest being 
the scalar matrix $0$ corresponding to the top-right corner. By selecting this kernel one obtains the following reduced array (of scalars): 
$$\dot{D}=\begin{pmatrix} \phantom{-}1 & -\frac{3}{4} & \phantom{-} \frac{1}{4} \\
-3&  \phantom{-}\frac{1}{4} &  -\frac{3}{4}\end{pmatrix}
$$
As $\dot{D}$ is a real matrix, the auxiliary matrices $\dot{\De}^0,\dot{\De}^1$ and $\dot{\De}^2$ are scalars. A calculation gives $\dot{\De}^0=\frac{1}{2}$, $\dot{\De}^1=0$ and $\dot{\De}^2=-2$ so that 
$\dot{\De}^1-w\dot{\De}^0=-\frac{w}{2}$ and $\dot{\De}^2-w\dot{\De}^0=-2-\frac{w}{2}$. Hence, the rank drops at the values: 
$$\begin{cases} 0=\mathrm{rank}(\dot{\De}^1)<\max\nolimits_{w\in \R} \mathrm{rank}(\dot{\De}^1-w\dot{\De}^0)&=1\\
0=\mathrm{rank}(\dot{\De}^2+4\dot{\De}^0)<\max\nolimits_{w\in \R} \mathrm{rank}(\dot{\De}^2-w\dot{\De}^0)&=1\end{cases}$$
\end{exemple}

\section{Asymptotic behaviour of the values}\label{asym} 

The aim of this section is to derive from Theorem \ref{charRD} several consequences on the asymptotic behavior of the discounted values. In order to state our results with the best possible bounds, we will no longer assume that the action sets are state-independent. Rather, let $I^k\times J^k$ denote the action set at state $k$, for all $1\leq k\leq n$, and let the payoff function $g$ and transition function $q$ be defined over the set $$Z:=\{(k,i,j)\,|\, 1\leq k\leq n,\ (i,j)\in I^k\times J^k\}$$ 
All the results obtained so far can the extended word for word to the case of state-dependent action sets. 
Let $D(\la)$ be the array representation of the game, 
which satisfies the properties $(H1)$ and $(H2)$, and let $\De^0,\dots, \De^n$ denote the corresponding auxiliary matrices, which are of equal size $\prod_{k=1}^n |I^k|\times \prod_{k=1}^n |J^k|$. 


\paragraph{Notation.} Set $L:=\prod_{k=1}^n \min(|I^k|,|J^k|)$\\[0.2cm]
 Set $g^-:=\min_{(k,i,j)\in Z} g(k,i,j)$ and $g^+:=\max_{(k,i,j)\in Z} g(k,i,j)$ \\
 


Let $\dot{D}(\la)$ be the reduced array, and let $\dot{\De}^0,\dots,\dot{\De}^n$ be the corresponding auxiliary matrices, which are square matrices of size less or equal than $L$. Hence, in particular, $\mathrm{rank}(\dot{\De}^0)\leq \min(L,\mathrm{rank}(\De^0))$, a bound which does not require knowing the values, nor computing a Shapley-Snow kernel for each local game. 



\subsection{A new proof for the convergence of the values}\label{conv}
Let $1\leq k\leq n$ be fixed throughout this section. 
As already noted, see Remark \ref{biv2}, the polynomials in $E^k$ are bi-variate polynomials in $(\la,w)$. By construction, for all $P^k\in E^k$ one has: 
$$\begin{cases} \mathrm{deg}_\la P^k(\la,w)\leq Ln \\ 
\mathrm{deg}_w P^k(\la,w)\leq \mathrm{rank}(\dot{\De}^0)  \end{cases}$$
For each $P^k\in E^k$ there exists unique $0\leq s\leq Ln$ and a unique (uni-variate) polynomial $\varphi(P^k)\not\equiv 0$ satisfying the following relation for all $w\in \R$: 
$$P^k(\la,w)=\la^s \varphi(P^k)(w)+o(\la^s),\quad \text{ as }\la \to 0$$ 
Indeed, like we did in Section \ref{exmlpe}, 
let $r:= \mathrm{deg}_\la P^k(\la,w)$ and let $P_0,\dots,P_{r}$ be the unique univariate polynomials  satisfying $P^k(\la,w)=\sum_{\ell=0}^{r} P_\ell(w)\la^\ell$. Then, $\varphi(P^k)=P_s$, where $s$ is the smallest integer $m$ such that $P_m\not\equiv 0$.\\ 

Let $V^k$ denote the set of all roots of $\varphi(P^k)$ that lie on the interval $[g^-,g^+]$
as $P^k$ ranges over all polynomials of $E^k$. The following result formalises what was obtained in the two examples of Section \ref{exmlpe}. 


\begin{proposition}\label{finite_set} The limit $v^k_0:=\lim_{\la\to 0}v^k_\la$ exists. Moreover, $v^k_0\in V^k$. 
\end{proposition}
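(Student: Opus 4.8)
The plan is to combine the uniform boundedness and equicontinuity of $\la\mapsto v_\la^k$ (item 3 of Shapley's results) with the polynomial identity $P^k(\la,v_\la^k)=0$ supplied by Theorem \ref{charRD}$(iii)$, exactly as in the two worked examples of Section \ref{exmlpe}. First I would recall that $|v_\la^k|\le\|g\|$ and, more precisely, that $v_\la^k\in[g^-,g^+]$ for every $\la\in(0,1]$, so the family $(v_\la^k)_\la$ has accumulation points as $\la\to 0$, all lying in $[g^-,g^+]$. The goal is to show every such accumulation point is a root of $\varphi(P^k)$ for a suitable $P^k\in E^k$; since $E^k$ is finite and each $\varphi(P^k)\not\equiv 0$ has finitely many roots, $V^k$ is finite, and then the continuity argument from Comment 1 of Section \ref{exmlpe} upgrades ``finitely many accumulation points'' to ``the limit exists''.

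The main subtlety is that Theorem \ref{charRD}$(iii)$ only gives, for each fixed $\la$, \emph{some} polynomial $P^k\in E^k$ with $P^k(\la,v_\la^k)=0$, and a priori this polynomial may vary with $\la$. So I would argue as follows. Let $(\la_m)$ be a vanishing sequence along which $v_{\la_m}^k\to w_0$. Since $E^k$ is finite, by passing to a subsequence I may assume that the same polynomial $P^k\in E^k$ works for all $\la_m$, i.e. $P^k(\la_m,v_{\la_m}^k)=0$ for all $m$. Now write $P^k(\la,w)=\la^s\varphi(P^k)(w)+o(\la^s)$ as $\la\to 0$, uniformly for $w$ in the compact set $[g^-,g^+]$ (the $o(\la^s)$ is uniform because $P^k$ is a polynomial of bounded degree in $w$, so its higher-order $\la$-coefficients are bounded on $[g^-,g^+]$). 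Dividing the identity by $\la_m^s$ and letting $m\to\infty$, using $v_{\la_m}^k\to w_0$ and the continuity of $\varphi(P^k)$, yields $\varphi(P^k)(w_0)=0$, hence $w_0\in V^k$.

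Having shown that every accumulation point of $(v_\la^k)$ as $\la\to 0$ lies in the finite set $V^k$, I would conclude the existence of the limit by the connectedness argument: if $w_0<w_1$ were two distinct accumulation points, then by the continuity (indeed $\|g\|$-Lipschitz continuity) of $\la\mapsto v_\la^k$ on $(0,1]$, the intermediate value theorem forces every $w\in[w_0,w_1]$ to be an accumulation point of $(v_\la^k)$ as well; this contradicts the finiteness of $V^k$. Therefore $(v_\la^k)$ has a unique accumulation point, i.e. $v_0^k:=\lim_{\la\to 0}v_\la^k$ exists, and it belongs to $V^k$.

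The step I expect to be the main obstacle is the uniformity of the expansion $P^k(\la,w)=\la^s\varphi(P^k)(w)+o(\la^s)$ in $w$, which is what licenses passing to the limit after dividing by $\la_m^s$ while $v_{\la_m}^k$ is itself moving; this is where one genuinely uses that the $w$-degree of the elements of $E^k$ is bounded (by $\mathrm{rank}(\dot\De^0)$) and that accumulation points are confined to the compact interval $[g^-,g^+]$. The finite-selection-by-subsequence trick handles the $\la$-dependence of the chosen polynomial cleanly, and everything else is the routine asymptotic bookkeeping already rehearsed in Section \ref{exmlpe}.
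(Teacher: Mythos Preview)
Your proposal is correct and follows essentially the same route as the paper's proof: pick an accumulation point along a sequence, use finiteness of $E^k$ to extract a subsequence with a common $P^k$, divide by $\la_m^s$ and pass to the limit to conclude $\varphi(P^k)(w_0)=0$, then invoke continuity plus finiteness of $V^k$ to rule out multiple accumulation points. Your explicit attention to the uniformity of the $o(\la^s)$ term on $[g^-,g^+]$ is a nice touch that the paper's argument leaves implicit.
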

\begin{proof} Let $w_0$ be an accumulation point of $(v_\la^k)$ along the sequence $(\la_m)$, that is $\lim_{m\to+\infty} \la_m=0$ and $\lim_{m\to +\infty} v^k_{\la_n}=w_0$. Accumulation points exist because $v^k_\la\in [g^-,g^+]$ for all $\la\in(0,1]$. 
By Theorem \ref{charRD}, for each $m\geq 1$ there exists a polynomial $P^k_m\in E^k$ such that $P_m^k(\la_m,v_{\la_m}^k)=0$. 
The set $E^k$ being finite, up to extracting a sub-sequence we can assume that $P^k_m=P^k$ for all $m\geq 1$ and some fixed polynomial $P^k\in E^k$. Hence: 
\begin{equation*}\label{eq2}P^k(\la_m, v^k_{\la_m})=0,\quad \forall m\geq 1\end{equation*}
By the definition of $\varphi(P^k)$, for all $w\in \R$ one has:
$$P^k(\la,w)=\la^s \varphi(P^k)(w)+o(\la^s),\quad \text{ as }\la \to 0$$ 
Consequently, dividing by $\la^s$ and taking $\la$ to $0$ one obtains: 
$$0=\lim_{m\to +\infty}\frac{P^k(\la_m,v_{\la_m}^k)}{\la_m^s}= \varphi(P^k)(w_0)$$ 
Thus, any accumulation point of $(v^k_\la)$ belongs to $V^k$. Yet, $\la\mapsto v_\la^k$ is a real continuous function, and the set of accumulation points of a real continuous function is either a singleton or an interval. 
The finiteness of $V^k$ implies that it is necessarily a singleton, which gives the desired result. 

%
\end{proof}

The next result follows directly from Proposition \ref{finite_set}, and the fact that, for any $P^k\in E^k$ one has 
$\mathrm{deg} \, \varphi(P^k)\leq \mathrm{rank}(\dot{\De}^0)$. 
\begin{corollaire}\label{alg2} Suppose that the payoff function $g$ and the transition function $q$ take only rational values. Then $v_0^k$ is algebraic of degree at most $\mathrm{rank}(\dot{\De}^0)$.\end{corollaire}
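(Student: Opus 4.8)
The plan is to derive Corollary \ref{alg2} as a direct consequence of Proposition \ref{finite_set} together with the degree bound on the polynomials $\varphi(P^k)$. First I would invoke Proposition \ref{finite_set} to conclude that the limit $v^k_0 = \lim_{\la\to 0} v^k_\la$ exists and belongs to the finite set $V^k$. By construction, $V^k$ is the set of roots lying in $[g^-,g^+]$ of the univariate polynomials $\varphi(P^k)$ as $P^k$ ranges over the finite family $E^k$. Since $v^k_0 \in V^k$, there exists some $P^k \in E^k$ such that $\varphi(P^k)(v^k_0) = 0$, so $v^k_0$ is a root of a nonzero univariate polynomial.

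Next I would check that this polynomial has rational coefficients and bounded degree. For the rationality: every entry of $D(\la)$ is, under the hypothesis, a polynomial in $\la$ with rational coefficients; hence every entry of $\De^k$ and $\De^0$ is too (the Kronecker determinant is a polynomial expression in these entries), and therefore every coefficient of every sub-determinant of $\De^k - w\De^0$ — viewed as a bivariate polynomial in $(\la,w)$ — is rational. Extracting the lowest-order coefficient in $\la$, namely passing from $P^k$ to $\varphi(P^k)$, preserves rationality of the coefficients. For the degree: as recorded just before the statement, $\mathrm{deg}_w P^k(\la,w) \le \mathrm{rank}(\dot{\De}^0)$ for all $P^k \in E^k$, and since $\varphi(P^k)$ is obtained by fixing a coefficient in the $\la$-expansion of $P^k$, one has $\mathrm{deg}\,\varphi(P^k) \le \mathrm{deg}_w P^k \le \mathrm{rank}(\dot{\De}^0)$. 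Combining: $v^k_0$ is a root of a nonzero polynomial of degree at most $\mathrm{rank}(\dot{\De}^0)$ with rational coefficients, which is exactly the assertion that $v^k_0$ is algebraic of degree at most $\mathrm{rank}(\dot{\De}^0)$.

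I do not expect any genuine obstacle here, since all the substantive work is already contained in Proposition \ref{finite_set} and in the degree estimates established just before the statement. The only point requiring a line of care is confirming that $\varphi(P^k)$ is not identically zero and does have rational coefficients — but the former is part of the definition of $\varphi$, and the latter follows from tracking rationality through the polynomial operations (Kronecker determinant, sub-determinants, extraction of a leading $\la$-coefficient), none of which introduces irrational quantities. So the proof is essentially a two-line deduction: existence and membership $v^k_0 \in V^k$ from Proposition \ref{finite_set}, then the degree bound $\mathrm{deg}\,\varphi(P^k)\le \mathrm{rank}(\dot{\De}^0)$.
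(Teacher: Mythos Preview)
Your proposal is correct and follows exactly the same line as the paper: invoke Proposition \ref{finite_set} to get $v^k_0\in V^k$, then use the degree bound $\mathrm{deg}\,\varphi(P^k)\leq \mathrm{rank}(\dot{\De}^0)$ recorded just before the statement. Your explicit verification that the coefficients of $\varphi(P^k)$ are rational (tracking rationality through the Kronecker determinant and the extraction of the lowest $\la$-coefficient) is a point the paper leaves implicit but is indeed needed for the conclusion.
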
 




\subsection{Speed of convergence and Puiseux expansion}
Fix $1\leq k\leq n$ throughout this section. 
Let us start by recalling the definition of a Puiseux series. 
A map $f:(0,\ep)\to \C$ is a \defn{Puiseux series} if there exists $N\in \N^*$, $m_0\in \Z$ and a sequence $(b_m)_{m\geq m_0}$ in $\C$ such that: 
$$f(\la)=\sum_{m\geq m_0}b_m \la^{m/N}$$
Any bounded Puiseux series satisfies $m_0\geq 0$ so that, in particular, it converges as $\la$ vanishes. The following result, due to Puiseux \cite{puiseux1850}, will be referred as the \emph{Puiseux theorem}. \emph{For any bi-variate polynomial $P(\la,w)$ satisfying $\mathrm{deg}_w P\geq 1$, there exists $\la_0>0$ such that the roots of $P(\la,\,\cdot\,)$ are Puiseux 
series in the interval $(0,\la_0)$}.\\

By the Puiseux theorem, the set of roots of all polynomials $P^k\in E^k$ satisfying $\mathrm{deg}_w P\geq 1$, is a finite set of 
Puiseux series. Let this set of series be denoted by $W^k$. 
 Our next result follows directly from Theorem \ref{charRD} $(iii)$ and the Puiseux theorem. 
\begin{proposition}\label{boundpuiseux} The following assertions hold: 
\begin{itemize}
\item[$(i)$] There exists $P^k\in E^k$  and $\la^k_0>0$ satisfying: $P^k(\la,v_\la^k)=0$, for all $\la\in (0,\la^k_0)$.
\item[$(ii)$] There exists $\la^k_0$ such that $\la\mapsto v_\la^k$ belongs to $W^k$ on $(0,\la^k_0)$. 
\item[$(iii)$] As $\la$ vanishes one has:  
$|v^k_\la-v^k_0|=O(\la^{1/a})$, where $a=\mathrm{rank}(\dot{\De}^0)$. 
\end{itemize}
\end{proposition}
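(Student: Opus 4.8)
The plan is to deduce $(i)$ and $(ii)$ simultaneously from a connectedness argument on a small interval $(0,\la^k_0)$, and then to read off $(iii)$ from the Puiseux expansion of $\la\mapsto v^k_\la$ together with the degree bound built into the definition of $E^k$. First I would separate the polynomials of $E^k$ that genuinely involve $w$ from those that do not: if $P\in E^k$ has $\mathrm{deg}_w P=0$, then $P(\la,w)=Q(\la)$ for a nonzero univariate polynomial $Q$, so the equality $P(\la,v^k_\la)=0$ can hold only on the finite set $\{Q=0\}$, which does not accumulate at $0$; hence there is $\la_1>0$ such that, by Theorem \ref{charRD}$(iii)$, every $\la\in(0,\la_1)$ satisfies $P(\la,v^k_\la)=0$ for some $P\in E^k$ with $\mathrm{deg}_w P\geq 1$.

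Next I would invoke the Puiseux theorem: the roots of the finitely many polynomials $P\in E^k$ with $\mathrm{deg}_w P\geq 1$ are Puiseux series on a common interval, and $W^k$ denotes this finite collection. Shrinking $\la_1$, I may assume that all series of $W^k$ are defined on $(0,\la_1)$, that on $(0,\la_1)$ the roots of each such $P$ are precisely the values of its Puiseux branches, and that any two \emph{distinct} series of $W^k$ disagree everywhere on $(0,\la_1)$ --- writing $t=\la^{1/N}$ for a common denominator $N$, the difference of two distinct series is a nonzero convergent power series in $t$, hence nonvanishing for small $t>0$. Then for each $\psi\in W^k$ the set $A_\psi:=\{\la\in(0,\la_1):v^k_\la=\psi(\la)\}$ is closed in $(0,\la_1)$ (both $\la\mapsto v^k_\la$ and $\psi$ are continuous), the sets $A_\psi$ are pairwise disjoint, and their union is all of $(0,\la_1)$, since by the previous step $v^k_\la$ is for each such $\la$ a real root of some admissible $P$, hence equals some $\psi(\la)$. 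Because $(0,\la_1)$ is connected and a finite disjoint union of closed subsets has exactly one nonempty member, there is $\psi^*\in W^k$ with $A_{\psi^*}=(0,\la_1)$; this gives $(ii)$ with $\la^k_0:=\la_1$, and taking $P^k\in E^k$ to be a polynomial with $\mathrm{deg}_w P^k\geq 1$ having $\psi^*$ as a root gives $(i)$.

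For $(iii)$ I would then use that $v^k_\la\in[g^-,g^+]$ is bounded, so the Puiseux series $\psi^*$ produced above is bounded and has the form $\psi^*(\la)=\sum_{m\geq 0}b_m\la^{m/N}$. The key point is that its ramification index $N$ is at most $\mathrm{deg}_w P^k$ --- the standard Newton--Puiseux estimate, since the $N$ conjugate branches of $\psi^*$ are among the $\mathrm{deg}_w P^k$ roots of $P^k$ --- and $\mathrm{deg}_w P^k\leq\mathrm{rank}(\dot{\De}^0)=:a$ by construction of $E^k$. Letting $\la\to 0$ identifies $v^k_0=b_0$, whence $v^k_\la-v^k_0=\sum_{m\geq 1}b_m\la^{m/N}$ has leading exponent $\geq 1/N\geq 1/a$; since $\la\in(0,1)$ this yields $|v^k_\la-v^k_0|=O(\la^{1/N})=O(\la^{1/a})$.

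The step I expect to be the main obstacle is the second one: guaranteeing that distinct Puiseux branches of the polynomials in $E^k$ do not cross one another arbitrarily close to $\la=0$, which is exactly what makes the sets $A_\psi$ genuinely disjoint and lets the connectedness argument collapse the finite family $\{A_\psi\}_{\psi\in W^k}$ to a single branch. Once that is in place, everything else --- the reduction in the first step, the ramification bound $N\leq \mathrm{deg}_w P^k$, and the asymptotic estimate --- is routine.
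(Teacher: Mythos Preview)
Your proof is correct. For parts $(i)$ and $(ii)$ your argument is essentially the paper's: the paper also shrinks to an interval on which distinct Puiseux series of $W^k$ do not meet, and then uses continuity of $\la\mapsto v_\la^k$ to conclude that the value function coincides with a single branch. Your connectedness phrasing (a finite partition of a connected set into closed pieces) is a clean reformulation of the same idea; you are also slightly more careful in explicitly ruling out the polynomials in $E^k$ with $\mathrm{deg}_w=0$, a point the paper leaves implicit.

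For part $(iii)$ you take a genuinely different route. The paper expands $P^k(\la,w)=\la^s\varphi(P^k)(w)+O(\la^{s+t})$, factors $\varphi(P^k)(w)=(w-v^k_0)^b R^k(w)$ with $R^k(v^k_0)\neq 0$, and reads off $|v^k_\la-v^k_0|=O(\la^{t/b})$ from $P^k(\la,v^k_\la)=0$; the worst case $t=1$, $b=\mathrm{rank}(\dot{\De}^0)$ gives the bound. You instead bound the ramification index $N$ of the Puiseux branch $\psi^*$ by $\mathrm{deg}_w P^k$ via the standard conjugate-branch count from the Newton--Puiseux theorem, and then observe that the leading exponent of $v^k_\la-v^k_0$ is at least $1/N\geq 1/a$. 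Your argument is shorter and more structural, at the price of invoking one more fact about Puiseux series (that a branch of ramification $N$ contributes $N$ distinct roots to $P^k(\la,\cdot)$); the paper's argument is more elementary but less direct. Both are valid.
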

\begin{remarque} The main novelty of $(i)$ and $(ii)$ is the explicit construction of $E^k$ and $W^k$, and the fact that 
we use directly the Puiseux theorem, that is, without invoking Tarski-Seidenberg elimination principle.
Concerning $(iii)$, not only this bound is sharper than all previously obtained bounds, there are also good reasons to expect it to be tight (see Section \ref{tight}). 
\end{remarque}

\begin{proof} 
$(i)$ and $(ii)$ 
By finiteness of the set $E^k$, there exists a common interval $(0,\ep)$ where all the Puiseux series of $W^k$ are well-defined.
%
By Theorem \ref{charRD} $(iii)$, for each $\la\in(0,1]$ there exists $P^k\in {E}^k$ satisfying  
$P^k(\la,v_\la^k)=0$. Consequently, for any $\la\in(0,\ep)$, the point $(\la,v_\la^k)\in \R^2$ lies on the graph of one of the Puiseux series in $W^k$. 
The continuity of $\la\mapsto v^k_\la$ implies that, as $\la$ goes to $0$, $v^k_\la$ may change from one Puiseux series to another only at points where two series intersect. As two different Puiseux series cannot intersect infinitely many times on $(0,\ep)$, so that there exist some $0<\ep'<\ep$ such that any two Puiseux series 
are either congruent or disjoint in $(0,\ep')$. Consequently, $\la\mapsto v^k_\la$ is one of them on $(0,\ep')$, which proves $(i)$ and $(ii)$, for $\la^k_0:=\ep'$. \\ 
$(iii)$ Let $P^k\in E^k$ and $\la^k_0$ be given by $(i)$. 
By definition,  $\varphi(P^k)$ is a uni-variate polynomial whose degree is bounded by $\mathrm{deg}_w P^k$.  
 Moreover, there exists $0\leq s\leq Ln$ 
and $t\geq 1$ such that, for all $w\in \R$:
$$P^k(\la,w)=\la^s \varphi(P^k)(w)+O(\la^{s+t}),\quad \text{ as }\la \to 0$$
Since one also has $P^k(\la,v_\la^k)=0$ for all $\la\in(0,\la^k_0)$, it follows that  
$\varphi(P^k)(v^k_\la)=O(\la^{t})$ as $\la\to 0$. Thus, in particular one has $\varphi(P^k)(v_0^k)=0$. Consequently, there exists an integer $1\leq b\leq \mathrm{deg}\, \varphi(P^k)\leq \mathrm{rank}(\dot{\De}^0)$ and a polynomial $R^k$ such that $R^k(v^k_0)\neq 0$ and: 
$$\varphi(P^k)(w)=(w-v^k_0)^b R^k(w),\quad \forall w\in \R$$
Hence, taking $w=v^k_\la$ one has: 
\begin{eqnarray*}
0=P^k(\la,v^k_\la)=\la^s (v_\la^k-v^k_0)^b R^k(v_\la^k)+ O(\la^{s+t}),\quad \text{ as }\la \to 0\end{eqnarray*}
which implies $|v^k_\la-v_0^k|= O(\la^{t/b})$ for $\la$ close to $0$. 
  The result follows, as $t/b$ is minimal for $t=1$ and $b=\mathrm{rank}(\dot{\De}^0)$. 
\end{proof}




\section{Concluding remarks}
\subsection{Tightness of the bounds}\label{tight}
\paragraph{Simple stochastic games.}
A \emph{simple stochastic game} is one satisfying 
$$\min(|I^k|,|J^k|)=1,\quad \text{ for all } 1\leq k\leq n$$
In particular, Markov decision processes are simple stochastic games, as they can be modeled as a stochastic game where $|J^k|=1$ for all $1\leq k\leq n$. For these games, one has $L=\prod_{k=1}^n\min(|I^k|,|J^k|)=1$ so that by Proposition \ref{boundpuiseux}, $v_\la^k$ converges to $v^k_0$ at a rate $O(\la)$ and there exist polynomials $a^k_0(\la)$ and $a^k_1(\la)$ of degree at most $n$ such that $v_\la^k$ is a root of 
$a^k_1(\la)w+ a^k_0(\la)=0$ for all sufficiently small $\la$. 

\paragraph{Absorbing games. }
An \emph{absorbing game} is one satisfying $|I^k|=|J^k|=1$ for all $2\leq k\leq n$, as one can assume with no loss of generality that state $1$ is the unique non-absorbing state and that both players have one action at every other state. Hence, $L=\min(|I^1|,|J^1|)$. 
By Proposition \ref{boundpuiseux} the characterising polynomial $P^1(\la,w)$ of $v_\la^1$ is of degree at most $L$ in $w$. The following example, due to Kohlberg \cite{kohlberg74} shows that this bound is tight. 
\begin{exemple}\em For any $p \geq 1$, consider the following absorbing game of size $p\times p$ introduced by Kohlberg \cite{kohlberg74}:
$$
\begin{pmatrix}
1^* & 0^* & \dots & 0^*\\
0 & 1^* & \ddots & \vdots \\
\vdots & \ddots & \ddots & 0^*\\
0 & \dots & 0 & 1^*
\end{pmatrix}$$
where $c^*$ indicates a stage payoff of $c$ and a certain transition to an absorbing state with payoff $c$. 
For every $\la\in(0,1)$, the entire matrix is the unique Shapley-Snow kernel of $\mathcal{G}^1(\la,v^1_\la)$ so that the characterising polynomial for $v^1_\la$ is given by the following equation:
$$P^1(\la,w)=\det\begin{pmatrix} 1-w & -w &\dots & -w \\
-\la w & 1-w&  & \vdots\\
\vdots & & \ddots & -w\\
-\la w & \dots & -\la w & 1-w\end{pmatrix}=(1-w)^p+\la R(w)+o(\la) $$
for some univariate polynomial $R$ satisfying $R(1)\neq 0$. From the equality $P^1(\la,v^1_\la)=0$ one deduces: 
$$v_\la^1=1-\la^{1/p} (R(v_\la^1))^{1/p}+o(\la^{1/p})$$
Hence, $v^1_\la$ converges to $1$ at a rate $\la^{1/p}$. As $p=L$, this example hits the bound of Corollary \ref{alg2}, namely $ |v^1_\lambda - v^1_{0}| = O(\lambda^{\frac{1}{L}})$. 
\end{exemple}
\paragraph{The general case.} Hansen et al. \cite{HKMT11} proved that, for a game with state-independent action sets $I$ and $J$ of common size $m$ and rational data, the algebraic degree of $v^k_\la$ (and, similarly, of the limit $v^k_0$) is bounded by $(2m+5)^n$, the best known bound so far. An example is also provided in \cite{HKMT11} of a game with $n+1$ states satisfying $|I^1|=|J^1|=1$ and $|I^k|=|J^k|=m$ for $2\leq k \leq n+1$, and where the algebraic degree of the discounted values is $m^n$. Note that $m^n$ coincides with $L:=\prod_{k=1}^{n+1}\min(|I^k|,|J^k|)$ in this example. Hence, there exists a family of stochastic games of arbitrary size, both in states and in actions, such that the algebraic degree of $v_\la^k$ is $L$. In this sense, Theorem \ref{charRD} $(iv)$ 
provides a tight bound for the algebraic degrees of $v^k_\la$. Because the algebraic degree of $v_\la^k$, the algebraic degree of $v^k_0$ and the speed of convergence of $v_\la^k$ to $v_0^k$ are closely related to each other, it is natural to think that the bounds we have obtained for the latter are tight too. However, we have not been able to establish these results. 


\subsection{Computing the exact values}\label{algo}
Fix $1\leq k\leq n$. By Proposition \ref{finite_set}, the limit value $v^k_0$ belongs to $V^k$, which is a set of roots of finitely many polynomials. The finiteness of this set was crucial in determining a new proof for the convergence of the values. We argue here that the set $V^k$ can also be used for algorithmic purposes, namely, if we are looking for the \emph{exact value} of $v_0^k$ in the case where all the entries of $g$ and $q$ are rational. 

\paragraph{An efficient algorithm.}
For an algebraic number $w\in \R$ its \emph{minimal polynomial} is the unique monic polynomial of least degree satisfying $P(w)=0$. By Kannan, Lenstra and Lovasz \cite{KLL88}, there exists an algorithm (referred in the sequel as the KLL algorithm) that computes the minimal polynomial $P$ of $w$, given a bound on the degree of $P$, a bound on the bit-size\footnote{For any integer $p\in \Z$, its bit-size is given by $\mathrm{bit}(p):=\log_2(\lfloor p\rfloor)+1$. For any rational number $p/q$ one defines $\mathrm{bit}(p/q):=\mathrm{bit}(p)+\mathrm{bit}(q)$.} of the coefficients of $P$ and an $\ep$-approximation of $w$, for $\ep$ small enough. A precise upper bound for $\ep$ is provided in \cite{KLL88}, as a function of the bounds on the coefficients and the degree of the minimal polynomial.\\ 

As already noted by one of the authors \cite{OB19}, the approach proposed in this manuscript yields to the best known bounds concerning the algebraic degree of $v^k_0$ and the coefficients of its minimal polynomial. Plugging them into the KLL algorithm, together with some approximation $v^k_\la$ yields a method for computing the exact values of $v^k_0$. A precise upper bound for $\la$ so that $v^k_\la$ is a good enough approximation of $v^k_0$ in the KLL algorithm, is obtained in \cite[Proposition 4.1]{OB19}. 

\paragraph{An alternative algorithm.}
Consider now the following alternative method for computing $v^k_0$ exactly. First, compute the \emph{separation} of $V^k$, that is:
$$\de:=\min\{|w-w'|\,|\,w,w'\in V^k,\ w'\neq w\}$$
Second, compute a $\de/2$-approximation for $v^k_0$, that is, $v^k_\la$ for an appropriate $\la$ such that 
$|v_\la^k-v^0|\leq \de/ 2$, where a precise explicit expression for $\la$ is given in \cite[Proposition 4.1]{OB19}. 
By the choice of $\de$ one clearly has: 
$$\left[v^k_\la-\frac{\de}{2},v^k_\la+\frac{\de}{2}\right]\cap V^k =\{v^k_0\}$$ 
Hence, the exact value of $v^k_0$ is obtained. 
\begin{remarque}Though the computation of $V^k$ may be problematic for large games, the alternative algorithm has the advantage of being easy and self-contained. \end{remarque}


\subsection{Another characterising polynomial}\label{altpoly}
For a given $\la\in(0,1]$ and $1\leq k\leq n$, we proved in Section \ref{characpoly} the existence of a characterising polynomial for $v^k_\la$, that is, one that satisfies $P^k(\la,v_\la^k)=0$. 
Our construction requires two steps: first, we define a reduced matrix game $\dot{\De}^k-w\dot{\De}^0$ by taking 
a Shapley-Snow kernel of each local game $\mathcal{G}^k(\la,v_\la)$; second, we use the rank drop condition 
of the values for this game. 
 In this paragraph, we propose the following different method for determining a characterising polynomial, based on 1) the theory of Shapley and Snow, but this time applied to the game $\De^k-w\De^0$ directly, and 2) the equality $\val((-1)^n(\De^k-v_\la^k\De^0))=0$ established by the authors in \cite{AOB18a} (see Theorem \ref{charac1}).  \\ 

Fix $1\leq k\leq n$. 
 Set $\mathcal{I}:=I^1\times \dots\times I^n$ and $\mathcal{J}:=J^1\times \dots\times J^n$. The game $\De^k-v_\la^k \De^0$ is a $\mathcal{I}\times \mathcal{J}$-matrix game, so that it admits a Shapley-Snow kernel. Let 
$\overline{\mathcal{I}}^k\subset \mathcal{I}$ and $\overline{\mathcal{J}}^k\subset \mathcal{J}$ be the subsets of actions that define one of its Shapely-Snow kernels. 
Let $\overline{\De}^0$ and $\overline{\De}^k$ be the $\overline{\mathcal{I}}^k\times \overline{\mathcal{J}}^k$ sub-matrices of $\De^0$ and $\De^k$, respectively, and for any $w\in \R$ set:
$$\begin{cases}{G}^k(w):= (-1)^n({\De}^k-w{\De}^0)\\ 
\overline{G}^k(w):= (-1)^n(\overline{\De}^k-w\overline{\De}^0)\\ 
\overline{P}^k(w):=\det(\overline{\De}^k-w\overline{\De}^0)\end{cases}$$
The polynomial thus obtained is another characterising polynomial of $v_\la^k$.

\begin{prop}\label{char3} 
 The polynomial $\overline{P}^k$ satisfies: 
$$\overline{P}^k(v_\la^k)=0,\quad \overline{P}^k\not\equiv 0,\quad  \text{ and }\quad \mathrm{deg} \overline{P}^k = \mathrm{rank}(\overline{\De}^0)$$
\end{prop}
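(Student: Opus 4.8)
The plan is to establish the three properties in turn: the equality $\overline{P}^k(v_\la^k)=0$ will follow directly from Theorem~\ref{charac1} and Theorem~\ref{SSK}, while $\overline{P}^k\not\equiv 0$ and the value of $\deg\overline{P}^k$ will be deduced from the rank-one structure of the cofactor matrix of a Shapley--Snow kernel, from Lemma~\ref{positivite}, and from \cite[Proposition~4.6]{demmel97}.

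First I would note that, by Theorem~\ref{charac1}, $\val\big((-1)^n(\De^k-v_\la^k\De^0)\big)=0$, so the $\mathcal{I}\times\mathcal{J}$-matrix game $M:=\De^k-v_\la^k\De^0$ has value $0$ (regardless of the parity of $n$). Since extracting the $\overline{\mathcal{I}}^k\times\overline{\mathcal{J}}^k$ sub-matrix is linear, and $\overline{\De}^k$, $\overline{\De}^0$ are by definition the corresponding sub-matrices of $\De^k$, $\De^0$, the Shapley--Snow kernel of $M$ selected in the statement is precisely $\overline{M}:=\overline{\De}^k-v_\la^k\overline{\De}^0$, so that $\overline{P}^k(v_\la^k)=\det(\overline{M})$. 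Then Theorem~\ref{SSK}~$(1)$ gives $S(\mathrm{co}(\overline{M}))\neq 0$ and Theorem~\ref{SSK}~$(3)$ gives $\det(\overline{M})=\val(M)\,S(\mathrm{co}(\overline{M}))=0$, whence $\overline{P}^k(v_\la^k)=0$.

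The substantive step is to prove that $\overline{P}^k$ is not the zero polynomial. I would apply Proposition~\ref{lemme_G-vU}~$(iv)$ to the game $M$ (which has value $0$) and its kernel $\overline{M}$, obtaining $\mathrm{co}(\overline{M})=S(\mathrm{co}(\overline{M}))\,\overline{x}\,\transp{\overline{y}}$, where $(\overline{x},\overline{y})$ is the associated basic solution --- a pair of probability vectors on $\overline{\mathcal{I}}^k$ and $\overline{\mathcal{J}}^k$. Writing $t:=w-v_\la^k$ and expanding the determinant to first order in $t$,
\begin{equation*}
\overline{P}^k(w)=\det\big(\overline{M}-t\,\overline{\De}^0\big)=-t\,\tr\big(\transp{\mathrm{co}(\overline{M})}\,\overline{\De}^0\big)+O(t^2),\qquad t\to 0,
\end{equation*}
the constant term $\det(\overline{M})=0$ having dropped out. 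Since $\transp{\mathrm{co}(\overline{M})}=S(\mathrm{co}(\overline{M}))\,\overline{y}\,\transp{\overline{x}}$, one gets $\tr(\transp{\mathrm{co}(\overline{M})}\,\overline{\De}^0)=S(\mathrm{co}(\overline{M}))\,\transp{\overline{x}}\,\overline{\De}^0\,\overline{y}$. By Lemma~\ref{positivite} every entry of $(-1)^n\overline{\De}^0$ is at least $\la^n$, hence $(-1)^n\transp{\overline{x}}\,\overline{\De}^0\,\overline{y}\geq\la^n\big(\sum_i\overline{x}_i\big)\big(\sum_j\overline{y}_j\big)=\la^n>0$; combined with $S(\mathrm{co}(\overline{M}))\neq 0$, this makes the coefficient of $t$ in $\overline{P}^k$ non-zero, so $\overline{P}^k\not\equiv 0$.

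The degree statement then follows at once: by \cite[Proposition~4.6]{demmel97}, for square matrices of equal size the polynomial $w\mapsto\det(\overline{\De}^k+w(-\overline{\De}^0))$ is either identically zero or of degree $\mathrm{rank}(-\overline{\De}^0)$; having excluded the former, $\deg\overline{P}^k=\mathrm{rank}(\overline{\De}^0)$. I expect the non-vanishing of $\overline{P}^k$ to be the only genuinely delicate point: a singular pencil $\overline{\De}^k-w\overline{\De}^0$ could in principle be identically singular, and what rules this out is exactly the interplay between the Shapley--Snow condition $S(\mathrm{co}(\overline{M}))\neq 0$ and the strict positivity of $(-1)^n\De^0$ provided by Lemma~\ref{positivite}.
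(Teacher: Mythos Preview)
Your proof is correct and follows essentially the same route as the paper's. Both arguments obtain $\overline{P}^k(v_\la^k)=0$ from the Shapley--Snow kernel property of $\overline{M}=\overline{\De}^k-v_\la^k\overline{\De}^0$, both show $\overline{P}^k\not\equiv 0$ by computing $(\overline{P}^k)'(v_\la^k)$ via Jacobi's formula and invoking Lemma~\ref{positivite}, and both cite \cite[Proposition~4.6]{demmel97} for the degree. The only cosmetic difference is that the paper argues the trace is non-zero because $\mathrm{co}(\overline{M})$ and $-\overline{\De}^0$ each have entries of a single sign (Proposition~\ref{lemme_G-vU}~$(i)$,~$(iv)$ and Lemma~\ref{positivite}), whereas you make the rank-one structure $\mathrm{co}(\overline{M})=S(\mathrm{co}(\overline{M}))\,\overline{x}\,\transp{\overline{y}}$ explicit and evaluate the trace as $S(\mathrm{co}(\overline{M}))\,\transp{\overline{x}}\,\overline{\De}^0\,\overline{y}$.
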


\begin{proof} $(i)$ By Theorem \ref{charac1}, one has $\val(G^k(v_\la^k))=0$. By Proposition \ref{lemme_G-vU} $(iii)$ one then has: 
\begin{equation}\label{three}\val(G^k(v_\la^k))=\val(\overline{G}^k(v_\la^k))=\det(\overline{G}^k(v_\la^k))=0
\end{equation}
Therefore, $\overline{P}^k(v_\la^k)=0$. 
To prove $\overline{P}^k\not\equiv 0$ it is enough to show that its derivative $(\overline{P}^k)'$ 
is not identically $0$. 
%
For any two square matrices $M$ and $H$ of same size, by Jacobi's formula one has: $$\det(M+\ep H)=\det(M)+\ep \tr(\transp{\mathrm{co}(M)}H)+o(\ep),\quad \text{ as } \ep\to 0$$
Equivalently, the directional derivative of $\det(M)$ in the direction $H$ is $\tr(\transp{\mathrm{co}(M)}H)$. Applying this result to $M=\overline{\De}^k-w\overline{\De}^0$ and $H=-\overline{\De}^0$ one obtains:
$$(\overline{P}^k)'(v_\la^k)= \tr(\transp{\mathrm{co}(\overline{G}^k(v^k_\la))}(-\overline{\De}^0))$$
By Proposition \ref{lemme_G-vU} $(i)$ and $(iv)$, the matrix $\mathrm{co}(\overline{G}^k(v^k_\la))$ is not identically zero and has all its entries of same sign. Similarly, by Lemma \ref{positivite}, all the entries of $-\overline{\De}^0$ are non-zero and of same sign. Consequently, 
$(\overline{P}^k)'(v_\la^k)\neq 0$, so that $\dot{P}^k\not\equiv 0$. 
To obtain the degree of $\overline{P}^k$, we proceed like in the proof of Proposition \ref{char2}. By 
\cite[Proposition 4.6]{demmel97}, for any square matrices $A$ and $B$ the polynomial $P(w):=\det(A+w B)$ is either identically $0$ or of degree $\mathrm{rank}(B)$. 
\end{proof}

\begin{remarque}
The bound on the degree of $\dot{P}^k$ is considerably better than the bound we obtained for $\overline{P}^k$. Indeed, one has:
$$\begin{cases}
\mathrm{deg}\dot{P}^k\leq \mathrm{rank}(\dot{\De}^0)\leq \prod_{k=1}^n \min(|I^k|,|J^k|)\\
\mathrm{deg}\overline{P}^k=\mathrm{rank}(\overline{\De}^0)\leq \min(\prod_{k=1}^n|I^k|, \prod_{k=1}^n |J^k|)\end{cases}$$
\end{remarque}

\begin{remarque} We have exhibited two different constructions that lead to a characterising polynomial for $v^k_\la$, that is: either we consider a Shapley-Snow Kernel of the game $(-1)^n(\De^k-v_\la^k \De^0)$ and use the fact that this game has value $0$, or we consider a sub-matrix of maximal rank of the reduced game $\dot{\De}^k-v_\la^k\dot{\De}^0$ obtained by taking a Shapley-Snow kernel at each local game. If the following condition holds:  
$$S(\mathrm{co}(\dot{\De}^k-v_\la^k\dot{\De}^0)) \neq 0$$ 
then $(-1)^n(\dot{\De}^k-v_\la^k \dot{\De}^0)$ is a Shapley-Snow kernel of $(-1)^n({\De}^k-v_\la^k{\De}^0)$, 
in which case the same polynomial can be obtained with the two constructions. 
\end{remarque}


\section{Appendix}
\section*{Appendix A: Kronecker products} \label{kro}
Let us start by recalling the definition of the Kronecker product of two matrices and of the Kronecker determinant of an array of matrices. 
\paragraph{Definition A1.} 
 The  \defn{Kronecker product} of two matrices $A$ and $B$ of sizes $m\times n$ and $p\times q$ respectively, denoted by $A\otimes B$, is an $mp \times nq$ matrix defined by blocks as follows: 
\[A \otimes B = \begin{pmatrix} a_{11} B & \cdots & a_{1n}B \\ \vdots & \ddots & \vdots \\ a_{m1} B & \cdots & a_{mn} B \end{pmatrix}\]
\paragraph{Definition A2.} 
The \defn{Kronecker determinant} of an $n\times n$ array of matrices: 
$$ \begin{pmatrix}  A^1_1& \dots & A^1_n\\ \vdots & \ddots & \vdots \\ A^n_n & \dots & A^n_n\end{pmatrix}$$
is well-defined if and only for each $1\leq k\leq n$, the matrices $A^k_1,\dots,A^k_n$ are of same size. In this case, it is given by: 
$$
\det\nolimits_\otimes\begin{pmatrix}  A^1_1& \dots & A^1_n\\ \vdots & \ddots & \vdots \\ A^n_n & \dots & A^n_n\end{pmatrix}:=
 \displaystyle\sum_{\sigma\in \Sigma(n)} \epsilon(\sigma) A^1_{\sigma(1)} \otimes \cdots \otimes A^n_{\sigma(n)} 
$$
where $\Sigma(n)$ is the set of permutations of $\{1,\dots,n\}$ and $\epsilon(\si)$ is the signature of $\si$. 
\paragraph{Properties A3.} The following well-known properties have been used in this manuscript: 
\begin{itemize}
 \item[$(K1)$] The Kronecker product $\otimes$ is bilinear and associative, but not commutative. 
 \item[$(K2)$] Let $A_1,\dots,A_n$ and $B_1,\dots,B_n$ be some matrices such that the products $A_kB_k$ are well-defined. Then
$(A_1 \otimes \cdots \otimes A_n) (B_1 \otimes \cdots \otimes B_n) = (A_1 B_1) \otimes \cdots \otimes (A_n B_n)$.
 \item[$(K3)$] The Kronecker determinant $\det_\otimes$ has similar properties as the usual determinant, that is: it is multilinear and alternating, but only with respect to the columns. Indeed, because of the non-commutativity of the Kronecker product, rows and columns do not play the same role, and the determinant needs to be developed by columns. 
\end{itemize}
\medskip 

In order to express the last two properties we need to introduce the \emph{canonical bijection} mapping the product set $\{1,\dots, p_1\}\times \dots\times \{1,\dots, p_n\}$ into the set $\{1,\dots, \prod_{\ell=1}^n p_\ell\}$ using the lexicographical order. That is, for any $p_1,\dots,p_n\in \N^*$ set:
 \begin{eqnarray*}
\{1,\dots,p_1\}\times \dots\times \{1,\dots,p_n\} &\rightarrow &\{1,\dots, \prod\nolimits_{\ell=1}^n p_\ell\}\\ 
(i_1,\dots, i_n) 
& \mapsto & (i_1-1) C_1+\dots+ (i_n-1)C_n+1
\end{eqnarray*}
where  $C_\ell:=\prod_{r:=\ell+1}^n p_r$ for each $1\leq \ell< n$ and $C_n=1$. 

\medskip 
\begin{itemize} 
\item[$(K4)$] Let $A_k$ be a $p_k\times q_k$, for all $1\leq k\leq n$. 
 Let $r$ and $s$ be, respectively, the images of $(i_1,\dots,i_n)\in \{1,\dots,p_1\}\times \dots\times \{1,\dots, p_n\}$ and 
 $(j_1,\dots,j_n)\in \{1,\dots,q_1\}\times \dots\times \{1,\dots, q_n\}$ with respect to the canonical bijection. Then the entry $(r,s)$ of $A_1 \otimes \cdots \otimes A_n$ is given by: 
$$(A_1 \otimes \cdots \otimes A_n)^{r s}= A_{1}^{i_1j_1}\cdots  A_{n}^{i_n j_n}$$

\item[$(K5)$] Let $A_{11},\dots, A_{nn}$ be an $n\times n$ array of matrices such that for all $1\leq k\leq n$, the matrices on the $k$-th row $A_{k1},\dots,A_{kn}$ are of same size $p_k\times q_k$. 
Let $r$ and $s$ be, respectively, the images of $(i_1,\dots,i_n)\in \{1,\dots,p_1\}\times \dots\times \{1,\dots, p_n\}$ and 
 $(j_1,\dots,j_n)\in \{1,\dots,q_1\}\times \dots\times \{1,\dots, q_n\}$ with respect to the canonical bijection. 
Then the entry $(r,s)$ of the matrix $\det_\otimes (A_{11},\dots, A_{nn})$ satisfies:
$$\det\nolimits_\otimes \begin{pmatrix}  A_{11}& \dots & A_{1n}\\ \vdots & \ddots & \vdots \\ A_{n1} & \dots & A_{nn}\end{pmatrix}^{r s}= \det \begin{pmatrix}  A^{i_1 j_1}_{11}& \dots & A^{i_1j_1}_{1n}\\ \vdots & \ddots & \vdots \\ A^{i_nj_n}_{n1} & \dots & A^{i_nj_n}_{nn}\end{pmatrix} $$
\end{itemize}

%

\section*{Appendix B: Proof of Proposition \ref{lemme_G-vU}}\label{proofsssk}
Let us start by recall its statement (see Section \ref{sskth}) 

\paragraph{Proposition \ref{lemme_G-vU}.} \emph{
Let $\dot{G}$ be a Shapley-Snow kernel of $G$, corresponding to a basic solution $(x,y)$. Then: 
\begin{enumerate}
\item[$(i)$] $S(\mathrm{co}(\dot{G}-v\dot{U})) \neq 0$ 
\item[$(ii)$] $\det(\dot{G}-v\dot{U})=\mathrm{val}(\dot{G}-v\dot{U})=\mathrm{val}({G}-v{U})=0$
\item[$(iii)$] $\mathrm{Ker}(\dot{G}-v\dot{U})=<\dot{y}>$ and $\mathrm{Ker}(\transp{(\dot{G}-v\dot{U})})=<\dot{x}>$ 
\item[$(iv)$] $\mathrm{co}(\dot{G}-v\dot{U})=S(\mathrm{co}(\dot{G}-v\dot{U}))\, \dot{x}\transp{\dot{y}}$. 
\end{enumerate}}

\bigskip 
\noindent The proof is based on three easy lemmas. 
\paragraph{Lemma B1.} \emph{
For any square matrix $M$ one has:
\begin{itemize}
\item[$(i)$] $\det(M+wU)=\det(M)+w S(\mathrm{co}(M))$, for all $w\in \R$
\item[$(ii)$] The map $w\mapsto S(\mathrm{co}(M+w U))$ is constant
\item[$(iii)$] The maps $w\mapsto \mathrm{co}(M+wU)\textbf{1}$ and 
$z\mapsto \transp{\mathrm{co}(M+wU)}\textbf{1}$ are constant\\
\end{itemize} } 

\begin{proof} $(i)$ Let $M$ be some square matrix. The function $w\mapsto \det(M+w U)$ is a polynomial in $w$. Subtracting one row from all other rows of $M+wU$, it is clear that its degree is at most $1$. From the formulae $\tr(\transp{M}U)=S(M)$ and from Jacobi's formula: 
$$\det(M+\ep H)=\det(M)+\ep\tr(\transp{\mathrm{co}(M)}H)+o(\ep), \quad \text{ as } \ep \to 0$$
which hold for any square matrix $M$, one deduces $\frac{\partial}{\partial w}\det(M+w U)(0)=S(\mathrm{co}(M))$. Hence, $\det(M+w U)=\det(M)+ w S(\mathrm{co}(M))$ for any square matrix $M$ and any $w\in \R$.\\
$(ii)$ 
Applying $(i)$ to $M+wU$ and $-w$ yields:
$$\det(M)=\det((M+wU)-wU)=\det(M+wU)-w S(\mathrm{co}(M+wU))$$
Comparing with $(i)$, one obtains $S(\mathrm{co}(M))=S(\mathrm{co}(M+wU))$ for any $M$ and $w$.\\
$(iii)$ By the symmetric role of both players, it is enough to prove the first statement. Let $m\in\N^*$ be the size of $M$, and let $M_1,\dots,M_m$ be its rows. Then, for each $\ell=1,\dots,m$ the $\ell$-th component of the vector $\mathrm{co}(M)\textbf{1}$ satisfies:
\begin{eqnarray*}
(\mathrm{co}(M)\textbf{1})^\ell&=&\det(M_1,\dots,M_{\ell-1},\textbf{1},M_{\ell+1},\dots,M_m)\\
&=& \det(M_1+w\textbf{1},\dots,M_{\ell-1}+w\textbf{1},\textbf{1},M_{\ell+1}+w\textbf{1},\dots,M_m+w\textbf{1})\\
&=& (\mathrm{co}(M+wU)\textbf{1})^\ell
\end{eqnarray*}
where the second equality follows from the properties of the determinant, as we added $w$ times the $\ell$-th column to the other columns. 
\end{proof}



\paragraph{Lemma B2.} \emph{
 Let $\dot{G}$ be a Shapley-Snow kernel for $G$. Then $\dot{G}+w\dot{U}$ is a Shapley-Snow kernel of the translated game $G+wU$, for any $w\in \R$.}\\  

\begin{proof} 
To prove that $\dot{G}+w\dot{U}$ is a Shapley-Snow kernel, it is enough to check properties $(1)$ and $(2)$ of Theorem \ref{SSK}. On the one hand,
 $S(\mathrm{co}(\dot{G}+w\dot{U}))=S(\mathrm{co}(\dot{G}))\neq 0$ by Lemma B1 $(ii)$. On the other hand, it follows from Lemma B1 $(iii)$ that the following strategies do not depend on $w$:  
$$\dot{x}(w)=  \frac{\mathrm{co}(\dot{G}+w\dot{U})}{S(\mathrm{co}(\dot{G}+w\dot{U}))}\mathbf{1},  \quad
\dot{y}(w)= \frac{\transp{\mathrm{co}}(\dot{G}+w\dot{U})}{S(\mathrm{co}(\dot{G}+w\dot{U}))}\mathbf{1} $$
which completes the proof.
\end{proof}

\paragraph{Lemma B3.}\emph{
 Let $M$ be a square matrix of size $a\in \N^*$ and rank $a-1$, and let $x$ and $y$ be such that $\mathrm{Ker}(\transp{M})=<x>$ and $\mathrm{Ker}(M)=<y>$. Then
there exists a constant $\al\neq 0$ such that 
$\mathrm{co}(M)=\al \, x \transp{y}$.} \\

\begin{proof}
Using the relation $\transp{A} \ \mathrm{co}(A)  = \det(A) \Id$, which is valid for any matrix $A$, and $\det(M) = 0$, we get $\transp{M} \mathrm{co}(M)  = 0$.  Moreover since $\mathrm{Ker}(M) = <y>$, all the rows of $\mathrm{co}(M)$ are proportional to $y$. Hence, there exists $x'$ such that $\mathrm{co}(M) = x' \transp{y}$. This equality shows that the columns of $\mathrm{co}(M)$  are proportional to $x'$, and a symmetric argument gives that the columns are proportional to $x$. Therefore, $x$ and $x'$ are proportional. Let $\al\in \R$ be such that $x'=\al x$ so that $\mathrm{co}(M) = \alpha x \transp{y}$. As $M$ is of rank $n-1$, the matrix $\mathrm{co}(M)$ is non-zero, so that $\al\neq 0$, which proves the result. 
 \end{proof}



\paragraph{Proof of Proposition \ref{lemme_G-vU}.} 
 $(i)$ By Lemma B2, $\dot{G}-v\dot{U}$ is a Shapley-Snow kernel for $G-vU$ so that, in particular,
$S(\mathrm{co}(\dot{G}-v\dot{U}))\neq 0$. \\[0.2cm] 
$(ii)$ 
For any matrix $M$ and $z\in \R$, clearly $\mathrm{val}(M+zU)=\mathrm{val}(M)+z$. Hence, the formulae of Theorem \ref{SSK} $(3)$ yield:
$$0=\mathrm{val}({G}-vU)=\mathrm{val}(\dot{G}-v\dot{U})=\frac{\det(\dot{G}-v\dot{U})}{S(\mathrm{co}(\dot{G}-v\dot{U}))}$$
$(iii)$ By the symmetric role of both players, it is enough to prove the first statement. The matrix $\dot{G}-v\dot{U}$ is not invertible by $(ii)$, and its matrix of cofactors $\mathrm{co}(\dot{G}-v\dot{U})$ is non-zero, thanks to $(i)$. Hence, if $1\leq b\leq \min(|I|,|J|)$ denotes its size, one has $\mathrm{rank}(\dot{G}-v\dot{U})=b-1$ or, equivalently $\mathrm{dim}(\mathrm{Ker}(\dot{G}-v\dot{U}))=1$. 
Yet
by Theorem \ref{SSK} $(4)$, one has $\dot{G} \dot{y} =v \dot{\textbf{1}}$ and $\dot{y}\neq 0$,
so that $(\dot{G}-v \dot{U})\dot{y}=0$. Consequently  $\mathrm{Ker}(\dot{G}-v\dot{U})=<\dot{y}>$.  \\[0.2cm] 
$(iv)$ It follows from Lemma B3, as the hypotheses are satisfied thanks to $(iii)$. Hence, $\mathrm{co}(\dot{G}-v\dot{U})=\al\, \dot{x}\transp{\dot{y}}$ for some $\al\neq 0$, so that $S(\mathrm{co}(\dot{G}-v\dot{U}))= S(\al\, \dot{x}\transp{\dot{y}})=\al$ because $S(\dot{x}\transp{\dot{y}})=1$. $\hfill \blacksquare$


\section*{Appendix C: Proof Proposition \ref{rank_drop}}\label{proof37} 

Let us start by recalling the statement this result (see Section \ref{sec_atkinson}). Recall that an $n\times (n+1)$ array of matrices $D=(M^k_\ell)$ is given, where for all $1\leq k\leq n$ the matrices $M^k_0,\dots,M^k_n$ are square and of equal size. 

\paragraph{Proposition \ref{rank_drop}.} \emph{Suppose that all the entries of $(-1)^n\De^0$ are strictly positive, and that there exists $z\in S^M\subset \R^n$ and $(x^1,y^1),\dots, (x^n,y^n)$  satisfying, for each $1\leq k\leq n$: 
 $$\begin{cases} x^k\in \mathrm{Ker} \transp{(M^k_0+z^1 M^k_1+\dots+z^n M^k_n)}, &  x^k>0\\
y^k\in \mathrm{Ker}(M^k_0+z^1 M^k_1+\dots+z^n M^k_n), &  y^k> 0\end{cases}$$
Then $z\in S^R$.}\\ 

Our proof relies on two lemmas: the first one comes from Muhi\v{c} and Plestenjak \cite[Lemma 3.4]{MP09}; the second is borrowed from Atkinson \cite[Chapter 6]{atkinson72}. We include both proofs for completeness, as the results are stated slightly differently in \cite{MP09} and \cite{atkinson72}.

\paragraph{Lemma C1.} \emph{
Let $A,B$ be two square matrices of the same size $m$, and let $v\in \R$ be such that $\det(A + v B) = 0$. Suppose there exists $x\in \mathrm{Ker}(\transp{(A + v B)})$ and $y\in \mathrm{Ker}(A + v B)$ such that $\transp{x} B y \neq 0$. Then
$ \mathrm{rank}(A + v  B) <  \max_{w \in \R} \mathrm{rank}(A + wB)$. }\\

\begin{proof} Let $r:= \mathrm{rank}(A + v  B)$ and suppose that $r=\max_{w \in \R} \mathrm{rank}(A + wB)$.  
Note that $r<m$, as the kernel of $A + v B$ contains at least one non-zero vector. 
As the rank of a matrix is the size of the largest invertible square sub-matrices,  $A+v B$ admits some invertible $r\times r$ sub-matrix. By the continuity of the determinant, there exists $\ep>0$ such that this sub-matrix is invertible in the interval $(v-\ep,v+\ep)$, so that $\mathrm{rank}(A + wB)\geq r$ in this interval. As we have supposed that $r$ is the maximal rank, the converse inequality also holds, so that $\mathrm{rank}(A + wB)=r$ on $(v-\ep,v+\ep)$. This implies 
the existence of a vector $y(w)\in \R^m$ with polynomial entries satisfying $(A+w B)y(w)=0$ on $(v-\ep,v+\ep)$ and $y(v)=y$. Derivating the first equality with respect to $w$, one obtains: 
$$ B y(w)+ (A+z B)y'(w)=0$$
Multiplication by $\transp{x}$ and taking $w=v$ yields then:
$$ \transp{x} B y+ \transp{x}(A+v B)y'(v)=0$$
where $\transp{x}(A+v B)=0$ by the choice of $x$. Hence $\transp{x} B y=0$, a contradiction.

\end{proof}

\paragraph{Lemma C2.} \emph{
 Let $z\in S^M$ and let $y^k\neq 0$ belong to $\mathrm{Ker}(M^k_0+z^1M^k+\dots+z^n M^k)$, for all $1\leq k\leq n$. Then $z\in S^\De$ and $(y^1\otimes\dots\otimes y^n) \in \mathrm{Ker}(\De^k-z^k \De^0)$ for all $1\leq k\leq n$.}\\ 

\begin{proof}
Let $z=(z^1,\dots, z^n)\in S^M$. The existence of $y^k\neq 0$ such that $y^k \in \mathrm{Ker}(M^k_0+z^1M^k_1+\dots+z^n M^k _n)$ for all $1\leq k\leq n$ follows from the fact that the matrices $M^k_0+z^1M^k_1+\dots+z^n M^k _n$ are singular. Moreover, one has $y^1\otimes\dots\otimes y^n \neq 0$, as $A\otimes B=0$ if and only if either $A=0$ or $B=0$. Fix $1\leq k\leq n$ and let $\ \widehat{}\ $ the omission of the $k$-th column. Then:
\begin{eqnarray*}
\De^k (y^1\otimes\dots\otimes y^n) &=& (-1)^k \det\nolimits_{\otimes} 
\begin{pmatrix}
M_0^1 y^1 & \dots &\widehat{M^1_k y^1} & \dots  & M^1_n y^1\\
\vdots &     &\vdots  &   & \vdots  \\
M_0^n y^n  &\dots &\widehat{M^n_k y^n} & \dots  & M^n_n y^n \end{pmatrix} \\
&=& 
(-1)^{k+1} \sum_{\ell=1}^n z^\ell \det\nolimits_{\otimes} \begin{pmatrix}
M_\ell^1 y^1 & \dots &\widehat{M^1_k y^1} & \dots  & M^1_n y^1\\
\vdots &     &\vdots  &   & \vdots  \\
M_\ell^n y^n  &\dots &\widehat{M^n_k y^n} & \dots  & M^n_n y^n\end{pmatrix}\\ 
&=& (-1)^{k+1} z^k \det\nolimits_{\otimes} \begin{pmatrix}
M_k^1 y^1 & \dots &\widehat{M^1_k y^1} & \dots  & M^1_n y^1\\
\vdots &     &\vdots  &   & \vdots  \\
M_k^n y^n  &\dots &\widehat{M^n_k y^n} & \dots  & M^n_n y^n\end{pmatrix} \\[0.25cm]
&=& z^k \De^0 (y^1\otimes\dots\otimes y^n)
\end{eqnarray*}
Indeed, the first equality follows from $(K2)$, the second is a consequence of the equalities $M^{\ell'}_0 y^{\ell'}= -\sum_{\ell=1}^n z^\ell M^{\ell'}_\ell y^{\ell'}$ which hold for all $1\leq \ell' \leq n$ and $(K3)$, the third follows from the fact that, for all $\ell\neq k$, the array of matrices has two equal columns so that its Kronecker determinant vanishes, and finally the last equality is obtained by taking a cyclic permutation of the columns (of matrices) which has signature $(-1)^{k+1}$. Hence 
$$(\De^k-z^k \De^0)(y^1\otimes\dots\otimes y^n)=0$$ or, equivalently, 
$y^1\otimes\dots\otimes y^n\in \mathrm{Ker}(\De^k-z^k\De^0)$ and $\det(\De^k-z^k\De^0)=0$. The result follows as this holds for every $1\leq k\leq n$. 
\end{proof}

\bigskip 

We are now ready to prove Proposition \ref{rank_drop}. For any matrix $M$, we write $M>0$ to indicate $M\geq 0$ and $M\neq 0$. 
\paragraph{Proof of Proposition \ref{rank_drop}.}On the one hand, ${x}^1 \otimes \cdots \otimes {x}^n>0$ 
since for any pair of matrices $A,B>0$ implies $A\otimes B> 0$. Similarly,
${y}^1 \otimes \cdots \otimes {y}^n>0$. 
Together with the assumption that all entries of $(-1)^n\De^0$ are strictly positive, it follows that: $$\transp{({x}^1 \otimes \cdots \otimes {x}^n)}\De^0 (y^1 \otimes \cdots \otimes {y}^n)\neq0$$ 
On the other hand, by Lemma C2, one has $z\in S^\De$ and ${y}^1 \otimes \cdots \otimes {y}^n\in \mathrm{Ker}({\De}^k-z^k{\De}^0)$ for all $1\leq k\leq n$. Reversing the roles of the players, one similarly has ${x}^1 \otimes \cdots \otimes {x}^n \in \mathrm{Ker}(\transp{({\De}^k-z^k{\De}^0))}$ for all $1\leq k\leq n$. 
The result follows then from Lemma C1, applied to 
$\De^k$, $\De^0$, $-z^k$, ${x}^1 \otimes \cdots \otimes {x}^n$ and $y^1 \otimes \cdots \otimes {y}^n$, for all $1\leq k\leq n$.$ \hfill \blacksquare$

\section*{Acknowledgements} 
The authors are very grateful to the comments and insight brought by the Editor and the two anonymous referees. \\ 

The second author gratefully acknowledges the support of the French National Research Agency, under
grant ANR CIGNE (ANR-15-CE38-0007-01).

\bibliographystyle{amsplain}
\bibliography{bibliothese2}

\end{document}